\documentclass[11pt]{amsart}
\usepackage{fullpage}
\usepackage[colorlinks=true,citecolor=black!60!green,linkcolor=black!60!red,filecolor=black!60!cyan,urlcolor=black!60!magenta]{hyperref}
\usepackage{amssymb,amsmath,amsthm,mathtools,extpfeil}
\usepackage[all,cmtip]{xy}
\usepackage{algorithmic}
\usepackage{xcolor}
\usepackage[shortlabels]{enumitem}
\newtheorem{thm}[equation]{Theorem}
\newtheorem{thmA}{Theorem}

\newtheorem*{thm*}{Theorem}
\newtheorem{lem}[equation]{Lemma}
\newtheorem{prop}[equation]{Proposition}
\newtheorem{cor}[equation]{Corollary}
\newtheorem{conj}[equation]{Conjecture}
\theoremstyle{definition}
\newtheorem*{defn}{Definition}
\newtheorem{rmk}[equation]{Remark}
\newtheorem{ex}[equation]{Example}
\newtheorem*{exs}{Examples}
\numberwithin{equation}{section}

\DeclareMathOperator{\id}{id}

\DeclareMathOperator{\Gr}{Gr}
\DeclareMathOperator{\Hom}{Hom}

\DeclareMathOperator{\Tor}{Tor}
\DeclareMathOperator{\Ext}{Ext}

\DeclareMathOperator{\act}{act}
\DeclareMathOperator{\Hact}{Hact}
\DeclareMathOperator{\add}{add}
\DeclareMathOperator{\rel}{rel}
\DeclareMathOperator{\proj}{proj}
\DeclareMathOperator{\cd}{cd}
\newcommand{\incl}{i}
\newcommand{\HtoY}{u}
\newcommand{\hatHtoY}{\hat u}
\newcommand{\YtoH}{v}
\newcommand{\hatYtoH}{\hat v}

\newcommand{\zerosec}{o}
\newcommand{\ph}{\varphi}

\newcommand{\eqrefb}[1]{(\ref*{#1})}
\newcommand{\ord}{\kappa}

\begin{document}
\title{Rational homotopy type and computability}
\author{Fedor Manin}
\address{Department of Mathematics, UCSB, Santa Barbara, California, USA}
\email{manin@math.ucsb.edu}
\begin{abstract}
  Given a simplicial pair $(X,A)$, a simplicial complex $Y$, and a map
  $f:A \to Y$, does $f$ have an extension to $X$?  We show that for a fixed
  $Y$, this question is algorithmically decidable for all $X$, $A$, and $f$ if
  $Y$ has the rational homotopy type of an H-space.  As a corollary, many
  questions related to bundle structures over a finite complex are likely
  decidable.  Conversely, for all other $Y$, the question is at least as hard as
  certain special cases of Hilbert's tenth problem which are known or suspected
  to be undecidable.
\end{abstract}
\dedicatory{In memory of Edgar H.\ Brown, 1926--2021}
\maketitle

\section{Introduction}

When can the set of homotopy classes of maps between spaces $X$ and $Y$ be
computed?  That is, when can this (possibly infinite) set be furnished with a
finitely describable and computable structure?  It is reasonable to restrict the
question to the setting of finite complexes: otherwise one risks encountering
spaces that themselves take an infinite amount of information to describe.
Moreover, the question of whether this set has more than one element is
undecidable for $X=S^1$, as shown by Novikov as early as 1955\footnote{This is
  the triviality problem for group presentations, translated into topological
  language.  This work was extended by Adian and others to show that many other
  properties of nonabelian group presentations are likewise undecidable.}.
Therefore it is also reasonable to require the fundamental group not to play a
role; in the present work, $Y$ is always assumed to be simply
connected.\footnote{The results can plausibly be extended to nilpotent spaces.}

We answer this question with the following choice of quantifiers: for what $Y$
and $n$ can the set of homotopy classes $[X,Y]$ be computed for \emph{every}
$n$-dimensional $X$?  Significant partial results in this direction were obtained
by E.~H.~Brown \cite{Brown} and much more recently by \v{C}adek et
al.~\cite{CKMSVW,CKMVW2,CKMVW} and Vok\v r\'inek \cite{Vok}.  The goal of the
present work is to push their program closer to its logical limit.



To state the precise result, we need to sketch the notion of an H-space, which is
defined precisely in \S\ref{S:H}.  Essentially, an H-space is a space equipped
with a binary operation which can be more or less ``group-like''; if it has good
enough properties, this allows us to equip sets of mapping classes to the H-space
with a group structure.

The \emph{cohomological dimension} $\cd(X,A)$ of a simplicial or CW pair $(X,A)$
is the least integer $d$ such that for all $n>d$ and every coefficient group
$\pi$, $H^n(X,A;\pi)=0$.
\begin{thmA} \label{main}
  Let $Y$ be a simply connected simplicial complex of finite type and $d \geq 2$,
  and suppose
  \begin{equation}
    \text{\parbox{.85\textwidth}{
    $Y$ has the rational homotopy type of an H-space through dimension $d$.
    That is, there is a map from $Y$ to an H-space (or, equivalently, to a
    product of Eilenberg--MacLane spaces) which induces isomorphisms on
    $\pi_n \otimes \mathbb{Q}$ for $n \leq d$.}} \tag{$*$} \label{isH}
  \end{equation}
  Then for any simplicial pair $(X,A)$ of cohomological dimension $d+1$ and
  simplicial map $f:A \to Y$, the existence of a continuous extension of $f$ to
  $X$ is decidable.
  
  Moreover, there is an algorithm which, given a simply connected
  simplicial complex $Y$, a simplicial pair $(X,A)$ of finite complexes of
  cohomological dimension $d$ and a simplicial map $f:A \to Y$,
  \begin{enumerate}
  \item Determines whether \eqrefb{isH} is satisfied;
  \item If it is, outputs the set of homotopy classes rel $A$ of extensions
    $[X,Y]^f$ in the format of a (perhaps empty) set on which a finitely
    generated abelian group acts virtually freely and faithfully (that is, with a
    finite number of orbits each of which has finite stabilizer).
  \end{enumerate}
\end{thmA}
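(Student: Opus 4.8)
The plan is to prove the two implications of the equivalence separately, arranging each argument to be effective enough to also yield the uniform algorithm of the final clause. Throughout I would work with a Postnikov tower $Y\to\cdots\to Y_n\to Y_{n-1}\to\cdots$ and the obstruction theory for the relative lifting problem. Since $\cd(X,A)=d+1$, only the stages $Y_n$ with $n\le d+1$ matter: the set of homotopy classes rel~$A$ of extensions of $f$, when nonempty, is acted on transitively by successive subquotients of $\bigoplus_{n\le d}H^n(X,A;\pi_nY)$, while the obstruction to lifting a partial extension from $Y_{n-1}$ to $Y_n$ is the pullback of the $k$-invariant $k_n\in H^{n+1}(Y_{n-1};\pi_nY)$, which can be nonzero only for $n\le d$. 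So the whole problem is encoded by finitely many cohomology computations over $(X,A)$ together with these finitely many $k$-invariants, all of which are algorithmically accessible via the effective-homology machinery of \v{C}adek et al.~\cite{CKMSVW,CKMVW2,CKMVW} and Vok\v r\'inek~\cite{Vok}.

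For (ii) $\Rightarrow$ (i), the role of the hypothesis is to \emph{linearize} this obstruction theory. Condition (ii) says exactly that each $k_n$ with $n\le d$ is a torsion class, of order a computable function of $Y$. Rationally the tower then splits through dimension $d$, so the ``difference'' of two partial extensions and the obstruction to continuing one are governed, up to torsion of bounded exponent, by \emph{additive} cohomology operations rather than by genuinely quadratic-or-higher ones; the residual torsion is of bounded complexity and can be dispatched by a finite search combined with linear algebra over $\mathbb{Z}$ and over the relevant finite cyclic groups. This is precisely the simply-describable (``stable'') situation handled by the \v{C}adek--Vok\v r\'inek algorithms. Chaining the stages, $[X,Y]^f$ is cut out as the solution set of a system of linear equations and congruences; one solves this effectively and repackages the solution set as a finite union of cosets, i.e.\ as a set carrying a virtually free and faithful action of a finitely generated abelian group, as required.

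For (i) $\Rightarrow$ (ii), I would argue the contrapositive. If $Y$ is not rationally an H-space through dimension $d$, then some rational $k$-invariant $k_{n_0}$ with $n_0\le d$ is nonzero; equivalently, in this range the minimal Sullivan model of $Y$ carries a nonzero differential, that is, a nontrivial \emph{nonlinear} relation among the rational homotopy generators, built from cup products and rational Whitehead products. Using such a relation as a multiplication gadget, together with the affine freedom in the choice of a partial extension to $Y_{n_0-1}$, I would build, for an arbitrary system of polynomial Diophantine equations, a simplicial pair $(X,A)$ with $\cd(X,A)=d+1$ and a map $f\colon A\to Y$ such that $f$ extends to $X$ if and only if the system has an integer solution. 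Cup products $H^p(X)\otimes H^q(X)\to H^{p+q}(X)$, as $X$ varies, realize arbitrary bilinear maps of free abelian groups, so one can already force an arbitrary system of quadratic equations; this suffices, since integer solvability of quadratic systems is undecidable (every Diophantine system reduces to a quadratic one after introducing auxiliary variables). Hence decidability of the extension problem over all such $(X,A,f)$ fails, contradicting (i). Finally, condition (ii) itself is decidable: compute a minimal Sullivan model of $Y$ through degree $d$ — this is algorithmic — and test whether it is free with zero differential in that range; combined with the two algorithms above this gives the uniform procedure.

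The step I expect to be the main obstacle is the positive direction: making precise \emph{how} ``rationally an H-space through dimension $d$'' forces the \emph{integral} obstruction theory to linearize. One must separate, in each $k$-invariant, the nonlinear-but-purely-rational part (which vanishes by hypothesis) from its torsion part (which must be shown to contribute only bounded-complexity, effectively solvable data), and at the same time present the output in the prescribed form, which requires controlling the orbits and stabilizers of the abelian groups acting at the successive Postnikov stages. The negative direction is conceptually simpler but still needs care to ensure that, for a given arbitrary $Y$ failing (ii), the polynomial systems one can realize are genuinely general enough to be undecidable.
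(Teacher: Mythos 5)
Your overall architecture — Postnikov induction with obstruction theory for the positive direction, a reduction from Hilbert's Tenth Problem via a nontrivial minimal-model differential for the negative direction, plus a decidability check for condition (ii) — matches the paper at a high level, but both halves have substantive gaps.

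In the negative direction you claim that ``cup products $H^p(X)\otimes H^q(X)\to H^{p+q}(X)$, as $X$ varies, realize arbitrary bilinear maps of free abelian groups, so one can already force an arbitrary system of quadratic equations.'' This is the crux, and it is wrong in the way that matters. When you pair a first nonzero $k$-invariant of $Y$ (equivalently, the first nonzero differential in $\mathcal{M}_Y$) against partial extensions built from fat wedges and attached cells, the resulting Diophantine constraints do \emph{not} range over arbitrary quadratic forms. They range over systems of the shape
\[
\sum_{i,j} a_{ij}^{(q)}\,\vec u_i^{\,T} B_p \vec v_j = c_{pq},
\]
where the \emph{coefficients} $a_{ij}^{(q)}$ and the \emph{right-hand sides} $c_{pq}$ can be chosen freely via $(X,A,f)$, but the matrices $B_p$ are \emph{fixed by $Y$}: they encode the quadratic part of the minimal-model differential (equivalently, the higher-order Whitehead product pairing with the generators $\eta_p$ via the Andrews--Arkowitz formula). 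The freedom over $X$ does not let you replace $B_p$ by something else. The classical undecidability of quadratic Diophantine systems therefore does not apply directly; you need a genuinely new number-theoretic result, namely that solvability of the restricted class (Q-BLIN$\{B_p\}$) is undecidable for \emph{any} nonzero family $\{B_p\}$. That reduction — simulating a general system of type (Q-DIFF) inside (Q-BLIN$\{B_p\}$) using auxiliary matrix variables $Z,W$ forced to be determinant-$\pm1$ matrices conjugating $B_p$ to itself — occupies Section 7 of the paper and is not something you can skip. Without it, the argument for (i)~$\Rightarrow$~(ii) is incomplete.

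In the positive direction, you correctly anticipate that the main difficulty is turning ``rationally H-space through $d$'' into effective control of the integral obstruction theory, but you stop at an analogy (``linear equations and congruences'') rather than supplying the mechanism. The paper does \emph{not} reduce $[X,Y]^f$ to a system one solves by linear algebra at each stage; instead it constructs, by Postnikov induction, an actual H-space $H_d=\prod_n K(\pi_n Y,n)$, maps $u_n\colon H_n\to Y_n$ and $v_n\colon Y_n\to H_n$ composing to $\chi_{r_n}$, and an H-space \emph{action} $\act_n\colon H_n\times Y_n\to Y_n$ compatible with the addition on $H_n$. It is this action — not the $k$-invariants being torsion per se — that produces the finitely generated abelian group $[X/A,H_d]$ acting on $[X,Y_d]^f$, and Sullivan's finiteness theorem together with the explicit composite $\chi_{r_d}$ gives virtual freeness and faithfulness. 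Getting this action to exist requires repeatedly killing torsion obstructions by precomposing with multiplication maps $\chi_r$, via lemmas (Lemma~\ref{lem:torsion} through \ref{lem:product}) about how $\chi_r^*$ behaves on cohomology of H-spaces and products $H\times U$. None of that is ``the stable situation already handled by \v Cadek--Vok\v r\'inek''; it is the new ingredient, and the form of the output required by the theorem (a set with a virtually free and faithful abelian group action, rather than a finite set or a group) depends on it. So while your instinct about where the obstacle lies is right, the proposal as written does not contain the construction needed to overcome it.
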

We give a few remarks about the statement.  First of all, it is undecidable
whether $Y$ is simply connected; therefore, when given a non-simply connected
input, the algorithm cannot detect this and returns nonsense, like previous
algorithms of this type.

Secondly, we provide evidence for the conjectural converse to the first part of
Theorem \ref{main}: that if \eqrefb{isH} is not true, then the extension problem
for pairs of cohomological dimension $d+1$ is undecidable.  We prove this in a
range of special cases, but the general case appears to be connected to deep
unsolved problems in number theory.  The best that can be said is that if the
converse is false, it must be due to a strange number-theoretic coincidence.

Finally, the difference between $d+1$ in the first part of the theorem and $d$
in the second is significant: if $\cd(X,A)=d+1$, then we can decide whether
$[X,Y]^f$ is nonempty, but our method of describing this set breaks down.  For
example, a homotopy class of maps $S^1 \times S^2 \to S^2$ is determined by two
numbers: the degree $d$ of the map on the $S^2$ factor and the (relative) Hopf
invariant $h$ on the $3$-cell.  However, $h$ is well-defined only up to multiples
of $2d$, and so in a natural sense
\[[S^1 \times S^2,S^2] \cong \bigsqcup_{r \in \mathbb{Z}} \mathbb{Z}/2r\mathbb{Z}.\]
This structure does not fit into the framework we construct in this paper for
describing $[X,Y]^f$.  Other similar examples are described in \cite[\S3]{MW},
and it would be interesting to give a general, perhaps computable, description
for $[X,Y]^f$ (or even just $[X,S^{2n}]$) in this ``critical'' dimension.

\subsection{Examples}
The new computability result encompasses several previous results, as well as new
important corollaries.  Here are some examples of spaces which satisfy condition
\eqrefb{isH} of Theorem \ref{main}:
\begin{enumerate}[(a)]
\item Any simply connected space with finite homology groups (or, equivalently,
  finite homotopy groups) in every dimension is rationally equivalent to a point,
  which is an H-space.  The computability of $[X,Y]$ when $Y$ is of this form was
  already established by Brown \cite{Brown}.
\item Any $d$-connected space is rationally an H-space through dimension $n=2d$.
  Thus we recover the result of \v{C}adek et al.~\cite{CKMVW2} that $[X,Y]^f$ is
  computable whenever $X$ is $2d$-dimensional and $Y$ is $d$-connected.  This
  implies that many ``stable'' homotopical objects are computable.  One example
  is the group of oriented cobordism classes of $n$-manifolds, which is
  isomorphic to the set of maps from $S^n$ to the Thom space of the tautological
  bundle over $\Gr_n(\mathbb{R}^{2n+1})$.
\item The sphere $S^n$ for $n$ odd is rationally equivalent to the
  Eilenberg--MacLane space $K(\mathbb{Z},n)$.  Therefore $[X,S^n]^f$ is
  computable for any finite simplicial pair $(X,A)$ and map $f:A \to S^n$; this
  is the main result of Vok\v r\'inek's paper \cite{Vok}.
\item Any Lie group or simplicial group $Y$ is an H-space, so if $Y$ is simply
  connected then $[X,Y]^f$ is computable for any $X$, $A$, and $f$.
\item Classifying spaces of connected Lie groups also have the rational homotopy
  type of an H-space \cite[Prop.~15.15]{FHT}.  Therefore we have (somewhat
  aspirationally):
  \begin{cor}
    Let $G$ be a connected Lie group, and suppose that the classifying space $BG$
    has a computable representation.  Then:
    \begin{enumerate}[(i)]
    \item Let $X$ be a finite CW complex.  Then the set of isomorphism classes of
      principal $G$-bundles over $X$ is computable.
    \item Let $(X,A)$ be a finite CW pair.  Then it is decidable whether a given
      principal $G$-bundle over $A$ extends over $X$.
    \end{enumerate}
  \end{cor}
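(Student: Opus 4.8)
The plan is to reduce both parts of the corollary to Theorem~\ref{main}, applied with $Y$ a finite simplicial model of a skeleton of $BG$. Recall that isomorphism classes of principal $G$-bundles over a finite complex $X$ are in natural bijection with $[X,BG]$, and that a principal $G$-bundle over $A$ with classifying map $f\colon A\to BG$ extends over $X\supseteq A$ if and only if $f$ extends over $X$, i.e.\ $[X,BG]^f\neq\varnothing$; so it is enough to compute these mapping sets in the format of Theorem~\ref{main}. I would then check the hypotheses on $Y=BG$: since $G$ is connected, $\pi_1(BG)\cong\pi_0(G)=0$, so $BG$ is simply connected; since $G$ is homotopy equivalent to a maximal compact subgroup $K$, $BG\simeq BK$ has a simplicial model of finite type; and by \cite[Prop.~15.15]{FHT}, $BG$ has the rational homotopy type of a product of Eilenberg--MacLane spaces, so condition~\eqrefb{isH} of Theorem~\ref{main} holds in every dimension $d$.

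The one genuine subtlety is that $BG$ is infinite-dimensional, whereas the algorithm of Theorem~\ref{main} takes a finite simplicial complex as input; I would handle this by truncation. With $N=\dim X$ (so $\dim A\le N$ and $\cd(X,A)\le N$), cellular approximation gives $[X,BG]\cong[X,B]$, and, after homotoping $f$ into $B$, $[X,BG]^f\cong[X,B]^f$, where $B$ is the $(N+1)$-skeleton of a simplicial model of $BG$ --- a finite simplicial complex that agrees with $BG$ through dimension $N$. I would then replace $(X,A)$ by a subdivision making it and the classifying map of the given bundle simplicial, and run the algorithm of Theorem~\ref{main} on $Y=B$, the simplicial pair $(X,A)$, and $f\colon A\to B$ (the low-dimensional cases $\cd(X,A)\le1$ being trivial since $BG$ is simply connected). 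Since $B$ satisfies condition~\eqrefb{isH} through the relevant dimension, the algorithm decides whether $[X,B]^f=\varnothing$, which settles part (ii); and when it is nonempty it outputs the set together with its virtually free and faithful action of a finitely generated abelian group. Taking $A=\varnothing$ and $f$ the empty map gives the required computable description of the set of isomorphism classes of principal $G$-bundles over $X$, proving part (i).

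I expect the main obstacle to be the effective construction in the previous paragraph: producing, from the data defining $G$, an explicit finite simplicial model $B$ of a skeleton of $BG$, and representing a given simplicial principal $G$-bundle over $A$ by a simplicial classifying map into $B$. This is a standard, if not wholly trivial, exercise in effective algebraic topology --- one computes the cohomology of $BG$ through the relevant dimension, builds a Postnikov or minimal model, and realizes it by a finite simplicial complex, in the spirit of \cite{CKMSVW,Vok} --- and for a fixed $G$ it is one-time preprocessing. Granting it, both parts of the corollary are immediate from Theorem~\ref{main}.
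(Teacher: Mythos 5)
Your proposal is correct and takes essentially the same route as the paper: reduce to Theorem~\ref{main} with $Y=BG$, using the classification of principal bundles and the facts that $BG$ is simply connected (since $G$ is connected), of finite type (via $G\simeq K$ compact), and rationally an H-space by \cite[Prop.~15.15]{FHT}. The paper treats the corollary as immediate from these observations; your added care about truncating $BG$ to a finite-dimensional skeleton $B$, homotoping the classifying map into $B$, and flagging the effective construction of a simplicial model for $B$ and the classifying map is a sound and useful elaboration of what the paper leaves implicit, not a different argument.
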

  \noindent
  In particular, given a representation $G \to GL_n(\mathbb{R})$, we should be
  able to understand the set of vector bundles with a $G$-structure.  This
  includes real oriented, complex, and symplectic bundles, as well as spin and
  metaplectic structures on bundles.  However, doing this in practice requires
  paying attention to computational models of Lie groups, Grassmannians, bundles,
  and so forth.
\item More generally, some classifying spaces of topological monoids have the
  rational homotopy type of an H-space.  This includes the classifying space
  $BG_n=\operatorname{BAut}(S^n)$ for $S^n$-fibrations (see
  \cite[Appendix 1]{Milnor} and \cite{Smith}); therefore, the set of fibrations
  $S^n \to E \to X$ over a finite complex $X$ up to fiberwise homotopy
  equivalence is computable.
\end{enumerate}
Conversely, most sufficiently complicated simply connected spaces do not satisfy
condition \ref{isH}.  The main result of \cite{CKMVW} shows that the extension
problem is undecidable for even-dimensional spheres, which are the simplest
example.  Other examples include complex projective spaces and most Grassmannians
and Stiefel manifolds.

\subsection{Proof ideas}
Suppose that $Y$ has the rational homotopy type of an H-space through dimension
$d$, but not through dimension $d+1$.  To prove the main theorem, we must provide
an algorithm which computes $[X,Y]^f$ if $\cd(X,A) \leq d$ and decides whether
$[X,Y]^f$ is nonempty if $\cd(X,A)=d+1$.  This builds on work of \v Cadek,
Kr\v c\'al, Matou\v sek, Vok\v r\'inek, and Wagner \cite{CKMVW2}.

To provide an algorithm, we use the rational H-space structure of the $d$th
Postnikov stage $Y_d$ of $Y$.  In this case, we can build an H-space $H$ of
finite type together with rational equivalences
\[H \to Y_d \to H\]
as well as an ``H-space action'' of $H$ on $Y_d$, that is, a map
$\act:H \times Y_d \to Y_d$ which satisfies various compatibility properties.
These ensure that the set $[X/A,H]$ (where $A$ is mapped to the basepoint) acts
via composition with $\act$ on $[X,Y_d]^f$.  In turn, $[X/A,H]$ is a product of
cohomology groups and therefore easily computable, and this allows us to also
compute $[X,Y_d]^f$.  When $\cd(X,A) \leq d$, the obvious map
$[X,Y]^f \to [X,Y_d]^f$ is a bijection; when $\cd(X,A)=d+1$, this map is a
surjection.  This gives the result.

In the last part of the paper, we study the extension problem in the case that
$Y$ is not a rational H-space through dimension $d$ and connect it to Hilbert's
tenth problem.  Recall that Hilbert asked for an algorithm to determine whether a
system of Diophantine equations has a solution.  Work of Davis, Putnam, Robinson,
and Matiyasevich showed that no such algorithm exists.  It turns out that the
problem is still undecidable for very restricted classes of systems of quadratic
equations; this was used in \cite{CKMVW} to show that the extension problem for
maps to $S^{2n}$ is undecidable.  We generalize their work: extension problems
for maps to a given $Y$ are shown to encode systems of Diophantine equations in
which terms are values on vectors of variables of a fixed bilinear map which
depends on $Y$.  We conjecture that Hilbert's tenth problem restricted to any
such subtype is undecidable and prove this in certain special cases.  However,
the general case seems quite difficult; in particular, it would imply a
long-standing conjecture on the undecidability of Hilbert's tenth problem over
number rings.

\subsection{Computational complexity}
Unlike \v Cadek et al.~\cite{CKMVW2,CKV}, whose algorithms are polynomial for
fixed $d$, and like Vok\v r\'inek \cite{Vok}, we do not give any kind of
complexity bound on the run time of the algorithm which computes $[X,Y]^f$.  In
fact, there are several steps in which the procedure is to iterate until we find
a number that works, with no a priori bound on the size of the number, although
it is likely possible to bound it in terms of dimension and other parameters such
as the cardinality of the torsion subgroups in the homology of $Y$.  There is
much space to both optimize the algorithm and discover bounds on the run time.

\subsection{The fiberwise case}
In a paper of \v Cadek, Kr\v c\'al, and Vok\v r\'inek \cite{CKV}, the results of
\cite{CKMVW2} are extended to the \emph{fiberwise} case, that is, to computing
the set of homotopy classes of lifting-extensions completing the diagram
\begin{equation} \label{axyb}
  \begin{gathered}
    \xymatrix{
      A \ar[r]^f \ar@{^(->}[d]_\incl & Y \ar@{->>}[d]^p \\
      X \ar@{-->}[ru] \ar[r]^g & B,
    }
  \end{gathered}
\end{equation}
where $X$ is $2d$-dimensional and the fiber of $Y \xrightarrow{p} B$ is
$d$-connected.  Vok\v r\'inek \cite{Vok} also remarks that his results for
odd-dimensional spheres extend to the fiberwise case.  Is there a corresponding
fiberwise generalization for the results of this paper?  The na\"\i ve hypothesis
would be that $[X,Y]^f_p$ is computable whenever the fiber of
$Y \xrightarrow{p} B$ is a rational H-space through dimension $n$.  This is
false; as demonstrated by the example below, rational homotopy obstructions may
still crop up in the interaction between base and fiber.

The correct fiberwise statement should relate to rational fiberwise H-spaces, as
discussed for example in \cite{LupS}.  However, such a result presents technical
difficulties which will require significant new ideas to overcome.
\begin{ex}
  Let $B=S^6 \times S^2$ and $Y$ be the total space of the fibration
  \[S^7 \to Y \xrightarrow{p_0} B \times (S^3)^2\]
  whose Euler class (a.k.a.\ the $k$-invariant of the corresponding
  $K(\mathbb{Z},7)$-bundle) is
  \[[S^6 \times S^2]+[(S^3)^2 \times S^2] \in H^8(B \times(S^3)^2).\]
  Then the fiber of $p=\pi_1 \circ p_0:Y \to B$ is the H-space
  $(S^3)^2 \times S^7$, but the intermediate $k$-invariant given above has a term
  which is quadratic in the previous part of the fiber.

  Given a system of $s$ polynomial equations each of the form
  \[\sum_{1 \leq i<j \leq r} a_{ij}^{(k)}(x_iy_j-x_jy_i)=b_k,\]
  with variables $x_1,\ldots,x_r,y_1,\ldots,y_r$ and coefficients $b_k$ and
  $a_{ij}^{(k)}$, we form a space $X'$ by taking $\bigvee_r S^3$ and attaching $s$
  $6$-cells, the $k$th one via an attaching map whose homotopy class is
  \[\sum_{1 \leq i<j \leq r} a_{ij}^{(k)}[\id_i,\id_j],\]
  where $\id_i$ is the inclusion map of the $i$th $3$-sphere.  We fix a map
  $f':X' \to S^6$ which collapses the $3$-cells and restricts to a map of degree
  $-b_k$ on the $k$th $6$-cell.  This induces a map $f=f' \times \id$ from
  $X=X' \times S^2$ to $B$.

  A lift of $f$ to $B \times (S^3)^2$ corresponds to an assignment of
  the variables $x_i$ and $y_i$.  The existence of a further lift to $Y$ is then
  equivalent to whether this assignment is a solution to the system of equations
  above.  Since the existence of such a solution is in general undecidable by
  \cite[Lemma 2.1]{CKMVW}, so is the existence of a lift of $f$ through $p$.
\end{ex}
\begin{rmk}
  The role of $S^2$ in this example is to make the fiber into a rational H-space.
  If we let $B=S^6$ and $Y$ be the total space of the fibration
  \[S^5 \to Y \to B \times (S^3)^2\]
  with Euler class $[S^6]+[(S^3)^2]$, then the fiber of $Y \to B$ is no longer a
  product $S^5 \times (S^3)^2$, even rationally, but rather has a nontrivial
  rational $k$-invariant in its Postnikov tower.
\end{rmk}

\subsection{Structure of the paper}
I have tried to make this paper readable to any topologist as well as anyone who
is familiar with the work of \v Cadek et al.  Thus \S\ref{S:RHT} and \ref{S:H}
attempt to introduce all the necessary algebraic topology background which is not
used in \v Cadek et al.'s papers: a bit of rational homotopy theory and some
results about H-spaces.  For the benefit of topologists, I have tried to separate
the ideas that go into constructing a structure on mapping class sets from those
required to compute this structure.  The construction of the group and action in
Theorem \ref{main} is discussed in \S\ref{S:XYA}.  In \S\ref{S:comp}, we
introduce previous results in computational homotopy theory from
\cite{CKMVW2,CKV,FiVo}, and in \S\ref{S:XYAcomp} we use them to compute the
structure we built earlier.  Finally, in \S\ref{S:H10} and \ref{S:undec}, we
discuss Hilbert's tenth problem and its relation to undecidability of the
extension problem.

\subsection*{Acknowledgements}
I would like to thank Shmuel Weinberger for explaining some facts about H-spaces,
and Marek Filakovsk\'y, Luk\'a\v s Vok\v r\'inek, and Uli Wagner for other useful
conversations and encouragement.  I would also like to thank the two referees for
their careful reading and their many corrections and suggestions which have
greatly improved the paper.  The second referee, in particular, caught a major
error which was present in previous versions.  I was partially supported by NSF
grant DMS-2001042.

\section{Rational homotopy theory} \label{S:RHT}

Rational homotopy theory is a powerful algebraicization of the topology of simply
connected topological spaces first introduced by Quillen \cite{Qui} and Sullivan
\cite{SulLong}.  The subject is well-developed, and the texts \cite{GrMo} and
\cite{FHT} are recommended as a comprehensive reference.  This paper requires
only a very small portion of the considerable machinery that has been developed,
and this short introduction should suffice for the reader who is assumed to be
familiar with Postnikov systems and other constructs of basic algebraic topology.

The key topological idea is the construction of rationalized spaces: to any
simply connected CW complex $X$ one can functorially (at least up to homotopy)
associate a space $X_{(0)}$ whose homology (equivalently, homotopy) groups are
$\mathbb{Q}$-vector spaces.\footnote{It's worth pointing out that this fits into
  a larger family of \emph{localizations} of spaces, another of which is used in
  the proof of Lemma \ref{lem:torsion}.}  There are several ways of constructing
such a rationalization, but the most relevant to us is by induction up the
Postnikov tower: the rationalization of a point is a point, and then given a
Postnikov stage
\[\xymatrix{
  K(\pi_n(X),n) \ar[r] & X_n \ar[r] \ar@{->>}[d] & E(\pi_n(X),n+1) \ar@{->>}[d]\\
  & X_{n-1} \ar[r]^-{k_n} & K(\pi_n(X),n+1),
}\]
one replaces it with
\[\xymatrix{
  K(\pi_n(X) \otimes \mathbb{Q},n) \ar[r] & X_{n(0)} \ar[r] \ar@{->>}[d]
  & E(\pi_n(X) \otimes \mathbb{Q},n+1) \ar@{->>}[d] \\
  & X_{n-1(0)} \ar[r]^-{k_n \otimes \mathbb{Q}}
  & K(\pi_n(X) \otimes \mathbb{Q},n+1).
}\]
This builds $X_{n(0)}$ given $X_{n-1(0)}$, and then $X_{(0)}$ is the homotopy type
of the limit of this construction.  We say two spaces are
\emph{rationally equivalent} if their rationalizations are homotopy equivalent.

The second key fact is that the homotopy category of rationalized spaces of
\emph{finite type} (that is, for which all homology groups, or equivalently all
homotopy groups, are finite-dimensional vector spaces) is equivalent to several
purely algebraic categories.  The one most relevant for our purpose is the
Sullivan DGA model.

A \emph{differential graded algebra} (DGA) over $\mathbb{Q}$ is a cochain complex
of $\mathbb{Q}$-vector spaces equipped with a graded commutative multiplication
which satisfies the (graded) Leibniz rule.  A familiar example is the algebra of
differential forms on a manifold.  A key insight of Sullivan was to associate to
every space $X$ of finite type a \emph{minimal} DGA $\mathcal{M}_X$ constructed
by induction on degree as follows:
\begin{itemize}
\item $\mathcal{M}_X(1)=\mathbb{Q}$ with zero differential.
\item For $n \geq 2$, the algebra structure is given by
  \[\mathcal{M}_X(n)=
  \mathcal{M}_X(n+1) \otimes \Lambda\Hom(\pi_n(X);\mathbb{Q}),\]
  where $\Lambda V$ denotes the free graded commutative algebra generated by $V$.
\item The differential is given on the elements of $\Hom(\pi_n(X);\mathbb{Q})$
  (\emph{indecomposables}) by the dual of the $n$th $k$-invariant of $X$,
  \[\Hom(\pi_n(X);\mathbb{Q}) \xrightarrow{k_n^*} H^{n+1}(X;\mathbb{Q}),\]
  and extends to the rest of the algebra by the Leibniz rule.  Although it is
  only well-defined up to a coboundary, this definition makes sense because one
  can show by induction that $H^k(\mathcal{M}_X(n-1))$ is naturally isomorphic to
  $H^k(X_{n-1};\mathbb{Q})$, independent of the choices made in defining the
  differential at previous steps.

  Note that from this definition, it follows that for an indecomposable $y$ of
  degree $n$, $dy$ is an element of degree $n+1$ which can be written as a
  polynomial in the indecomposables of degree $<n$.  In particular, it has no
  linear terms.
\end{itemize}
The DGA $\mathcal{M}_X$ is the functorial image of $X_{(0)}$ under an equivalence
of homotopy categories.

Many topological constructions can thus be translated into algebraic ones.  This
paper will use the following:
\begin{itemize}
\item The Eilenberg--MacLane space $K(\pi,n)$ corresponds to the DGA
  $\Lambda \Hom(\pi,\mathbb{Q})$ with generators concentrated in dimension $n$
  and zero differential.
\item Product of spaces corresponds to tensor product of DGAs.  In particular:
  \begin{prop} \label{prop:Q}
    The following are equivalent for a space $X$:
    \begin{enumerate}[(a)]
    \item $X$ is rationally equivalent to a product of Eilenberg--MacLane spaces.
    \item The minimal model of $X$ has zero differential.
    \item The rational Hurewicz map
      $\pi_*(X) \otimes \mathbb{Q} \to H_*(X;\mathbb{Q})$ is injective.
    \end{enumerate}
  \end{prop}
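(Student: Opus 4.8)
The plan is to prove the cycle of implications $(b)\Rightarrow(a)\Rightarrow(c)\Rightarrow(b)$, using the Sullivan minimal model as the main bookkeeping device. The implication $(b)\Rightarrow(a)$ is essentially a matter of recognizing the model: if $\mathcal{M}_X$ has zero differential, then by the inductive description of the minimal model it is a free graded-commutative algebra $\Lambda V$ with $V = \bigoplus_n \Hom(\pi_n(X);\mathbb{Q})$ in degree $n$ and $d=0$. This is precisely the tensor product over all $n$ of the minimal models $\Lambda\Hom(\pi_n(X);\mathbb{Q})$ of the Eilenberg--MacLane spaces $K(\pi_n(X),n)$ listed in the bulleted correspondences above. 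Since product of spaces corresponds to tensor product of DGAs and the equivalence of homotopy categories detects rational homotopy type, $X$ is rationally equivalent to $\prod_n K(\pi_n(X)\otimes\mathbb{Q},n)$, a product of Eilenberg--MacLane spaces.

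For $(a)\Rightarrow(c)$: the rational Hurewicz map is natural and a rational homotopy invariant, so it suffices to check injectivity for a product of rational Eilenberg--MacLane spaces. For a single $K(V,n)$ with $V$ a $\mathbb{Q}$-vector space, $\pi_n\otimes\mathbb{Q}=V$ injects into $H_n(K(V,n);\mathbb{Q})=V$ (they are in fact equal), and rational homology of a product splits as a tensor product by Künneth, under which the Hurewicz images of the factors land in the degree-wise summands $H_*(K(\pi_n,n))\otimes(\text{rest})$ and remain linearly independent. Hence injectivity holds for the product.

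The implication $(c)\Rightarrow(b)$ is the one I expect to be the main obstacle, and it is where the structure of the minimal model does the real work. Dualize: $(c)$ says that the dual map $H^n(X;\mathbb{Q}) \to \Hom(\pi_n(X),\mathbb{Q})$ is surjective for every $n$, i.e.\ every indecomposable generator $y$ of $\mathcal{M}_X$ in degree $n$ is, modulo decomposables, represented by a closed class; equivalently $dy$ lies in the decomposables — which the excerpt already notes — but we need more: that one can \emph{choose} the minimal model so that $dy=0$ on the nose. The argument is by induction up the Postnikov tower. Suppose $\mathcal{M}_X(n+1)$ has been arranged to have zero differential, so $\mathcal{M}_X(n+1)=\Lambda W$ with $W$ concentrated in degrees $\le n$ and $d=0$. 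For an indecomposable $y$ in degree $n$, $dy$ is a polynomial in lower generators of degree $n+1$, automatically closed, and defines a class $[dy]\in H^{n+1}(\mathcal{M}_X(n+1))\cong H^{n+1}(X_n;\mathbb{Q})$, i.e.\ the $k$-invariant $k_{n+1}^*(y^\vee)$. Since $d=0$ on $\Lambda W$, $H^{n+1}(\Lambda W)$ is the degree-$(n+1)$ part of $\Lambda W$ itself, which consists entirely of decomposable elements (a product of generators each of degree $\le n$, at least two factors, or one generator in degree $n+1$ — but there are none in degree $n+1$ yet at this stage since $W$ lives in degrees $\le n$; one handles degree $n+1$ generators in the next step). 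So $[dy]$ is a decomposable cohomology class. Now hypothesis $(c)$ says the rational Hurewicz map is injective in \emph{all} degrees; translating through the standard identification of the rational Hurewicz map with the dual of the map sending a generator to its $k$-invariant, injectivity forces every $k$-invariant $k_{n+1}^*$, restricted to indecomposables, to vanish — equivalently $[dy]=0$ for all $y$. Then $dy$ is an exact element of $\Lambda W$; but with zero differential the only exact element is $0$, so in fact $dy=0$. This completes the induction and gives $(b)$. The delicate point to get right is the precise dictionary between the rational Hurewicz homomorphism and the linear part of the $k$-invariants (so that "Hurewicz injective" really does kill all the differentials and not merely their indecomposable parts), together with checking the degree bookkeeping at the step where a new generator could a priori appear in the target degree $n+1$; I would handle this by organizing the induction so that all generators of degree $\le n$ are in place before analyzing differentials landing in degree $n+1$, and invoking the excerpt's observation that $dy$ has no linear terms to rule out the generator-of-degree-$n+1$ case cleanly.
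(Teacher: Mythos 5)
The paper states this proposition without proof, treating it as standard rational homotopy theory (the relevant facts can be found in \cite{FHT} or \cite{GrMo}), so there is no in-paper argument to compare against. Your proof is correct and is essentially the standard one. The implications $(b)\Rightarrow(a)$ and $(a)\Rightarrow(c)$ are routine translations through the DGA dictionary, and your $(c)\Rightarrow(b)$ carries out the right induction: once $\mathcal{M}_X$ through degree $n-1$ is $\Lambda W$ with zero differential, every degree-$n$ generator $y$ has $dy\in(\Lambda W)^{n+1}$ automatically closed; injectivity of the rational Hurewicz map in degree $n$ (equivalently, surjectivity of its dual $H^n(X;\mathbb{Q})\to\Hom(\pi_n(X),\mathbb{Q})$) forces $[dy]=0$ in $H^{n+1}(\Lambda W)$, and since $\Lambda W$ has no nonzero exact elements, $dy=0$.

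One phrase is slightly off and worth fixing: the dual rational Hurewicz map is not ``the dual of the map sending a generator to its $k$-invariant.'' The correct relationship is an exact sequence
\[
H^n(X;\mathbb{Q}) \longrightarrow \Hom(\pi_n(X),\mathbb{Q}) \xrightarrow{\;k_n^*\;} H^{n+1}(X_{n-1};\mathbb{Q}),
\]
where $k_n^*$ sends $v\mapsto[dv]$, so that the image of the dual Hurewicz map is $\ker k_n^*$ (this holds without the inductive hypothesis; the inductive hypothesis is needed only to upgrade $[dy]=0$ to $dy=0$). This is exactly what you use, so the conclusion stands; only the description of the identification is imprecise. (Your indexing $\mathcal{M}_X(n+1)$ for the lower stage follows what looks like a typo in the paper's recursion $\mathcal{M}_X(n)=\mathcal{M}_X(n+1)\otimes\Lambda\Hom(\pi_n(X);\mathbb{Q})$, which should read $\mathcal{M}_X(n-1)$ on the right-hand side; your usage is internally consistent, but you may want to normalize it.)
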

\end{itemize}
Finally, we note the following theorem of Sullivan:
\begin{thm}[Sullivan's finiteness theorem {\cite[Theorem 10.2(i)]{SulLong}}]
  Let $X$ be a finite complex and $Y$ a simply connected finite complex.  Then
  the map $[X,Y] \to [X,Y_{(0)}]$ induced by the rationalization functor is
  finite-to-one.
\end{thm}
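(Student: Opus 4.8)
The plan is to climb the Postnikov tower of $Y$ by induction, translating the finiteness of each ``fiber'' into a statement about finitely generated abelian groups. Write $m=\dim X$. The maps $Y\to Y_m$ and $Y_{(0)}\to(Y_{(0)})_m$ have $m$-connected homotopy fibers, hence induce bijections $[X,Y]\xrightarrow{\sim}[X,Y_m]$ and $[X,Y_{(0)}]\xrightarrow{\sim}[X,(Y_{(0)})_m]$; moreover $(Y_{(0)})_m\simeq(Y_m)_{(0)}$ and these bijections are compatible with rationalization, as one sees directly from the stagewise construction of $Y_{(0)}$ recalled above. So it suffices to prove, by induction on $n$, that $r_n\colon[X,Y_n]\to[X,(Y_n)_{(0)}]$ is finite-to-one; the base case $n=1$ is trivial, both sides being points.

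For the inductive step put $\pi=\pi_n(Y)$ and use the principal fibration $K(\pi,n)\to Y_n\xrightarrow{p}Y_{n-1}$ together with its rationalization $K(\pi\otimes\mathbb{Q},n)\to(Y_n)_{(0)}\xrightarrow{p_{(0)}}(Y_{n-1})_{(0)}$ (again visible from the construction of $Y_{(0)}$). Fix $\beta\in[X,(Y_n)_{(0)}]$ with image $\gamma\in[X,(Y_{n-1})_{(0)}]$. Since $r_n^{-1}(\beta)$ maps into $r_{n-1}^{-1}(\gamma)$, which is finite by induction, it is enough to bound, for each $g\in r_{n-1}^{-1}(\gamma)$, the set $P_g$ of classes in $r_n^{-1}(\beta)$ lying over $g$. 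If $g$ does not lift along $p$ then $P_g=\emptyset$. Otherwise, recall the standard description of the fibers of $p_*\colon[X,Y_n]\to[X,Y_{n-1}]$: the group $H^n(X;\pi)=[X,K(\pi,n)]$ acts on $[X,Y_n]$ fiberwise over $[X,Y_{n-1}]$, transitively on each fiber of $p_*$, and---the group being abelian---with a stabilizer $H_g\subseteq H^n(X;\pi)$ depending only on $[g]$, so that $p_*^{-1}([g])\cong H^n(X;\pi)/H_g$ as $H^n(X;\pi)$-sets. Applying the same to $p_{(0)}$ gives $(p_{(0)})_*^{-1}([g_{(0)}])\cong H^n(X;\pi\otimes\mathbb{Q})/H_{g,(0)}$, and $H^n(X;\pi\otimes\mathbb{Q})\cong H^n(X;\pi)\otimes\mathbb{Q}$ because $X$ is finite. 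Since $P_g\neq\emptyset$ forces $\gamma=g_{(0)}$, naturality of this whole picture under rationalization identifies $P_g$ with a torsor over $c^{-1}(H_{g,(0)})/H_g$, where $c\colon H^n(X;\pi)\to H^n(X;\pi)\otimes\mathbb{Q}$ is the canonical map (note $c(H_g)\subseteq H_{g,(0)}$, so $H_g\subseteq c^{-1}(H_{g,(0)})$). Thus everything comes down to finiteness of the index $[\,c^{-1}(H_{g,(0)}):H_g\,]$.

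This index is finite as soon as $H_{g,(0)}$ lies in the $\mathbb{Q}$-linear span of $c(H_g)$: then every $\alpha\in c^{-1}(H_{g,(0)})$ satisfies $M\alpha\in H_g+\ker c$ for some $M\geq1$, so its class in the finitely generated abelian group $H^n(X;\pi)/H_g$ is torsion, and the torsion subgroup of a finitely generated abelian group is finite. The inclusion $H_{g,(0)}\subseteq\mathbb{Q}\cdot c(H_g)$ is the one point that is not formal. I would deduce it by identifying $H_g$ with the image of the monodromy homomorphism $\pi_1(\Map(X,Y_{n-1}),g)\to H^n(X;\pi)$ attached to the principal fibration $p$, identifying $H_{g,(0)}$ with the image of the analogous homomorphism for $p_{(0)}$, and then invoking the classical fact---valid since $X$ is a finite complex and $Y_{n-1}$ is simply connected of finite type---that the path component of $\Map(X,(Y_{n-1})_{(0)})$ containing $g_{(0)}$ is the rationalization of the path component of $\Map(X,Y_{n-1})$ containing $g$; its fundamental group is then $\pi_1(\Map(X,Y_{n-1}),g)\otimes\mathbb{Q}$, and naturality of the monodromy construction under rationalization yields $H_{g,(0)}=\mathbb{Q}\cdot c(H_g)$. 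Establishing this mapping-space input is where I expect the real work to lie; an alternative would be to strengthen the induction to all finite complexes in place of $X$ and feed back the case $X\times S^1$ when analyzing $\pi_1(\Map(X,Y_{n-1}))$. Everything else is routine obstruction theory together with bookkeeping in finitely generated abelian groups.
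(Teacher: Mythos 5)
The paper does not give a proof of this statement; it is quoted verbatim from Sullivan \cite[Theorem 10.2(i)]{SulLong}, so there is no internal argument to compare against. For what it's worth, Sullivan's original proof proceeds through the arithmetic fracture square, playing the rationalization off against the profinite completion $\hat Y$ and exploiting compactness of $[X,\hat Y]$; your Postnikov-by-Postnikov induction is a genuinely different (and in some ways more self-contained) route, closer in spirit to the treatments in Hilton--Mislin--Roitberg.

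As for correctness: your reduction to the dimension-$m$ Postnikov stage and the torsor bookkeeping at each stage are all sound. The identification of $p_*^{-1}([g])$ with $H^n(X;\pi)/H_g$, where $H_g$ is the image of the monodromy $\pi_1(\Map(X,Y_{n-1}),g)\to H^n(X;\pi)$, and the deduction that $c^{-1}(H_{g,(0)})/H_g$ is contained in the (finite) torsion subgroup of $H^n(X;\pi)/H_g$ once $H_{g,(0)}\subseteq\mathbb{Q}\cdot c(H_g)$, are both correct. You are also right to flag that this last inclusion is the genuine content, and that it is supplied by the statement that, for $X$ finite and $Z$ nilpotent of finite type, the component of $\Map(X,Z_{(0)})$ at $g_{(0)}$ is the rationalization of the component of $\Map(X,Z)$ at $g$; this is the theorem of Hilton--Mislin--Roitberg (see also Theorem II.3.11 of their book), and it is not a small input --- it is of comparable depth to the finiteness theorem itself. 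So the argument is correct modulo a serious black box, and your suggested alternative (strengthening the induction and feeding in $X\times S^1$ to control $\pi_1$ of the mapping space) is the natural way to internalize it, though as written it is only a sketch: you would still need to relate $H_g$ to the rel-$(X\times *)$ classes $[X\times S^1,Y_{n-1}]^g$ and show that the induction hypothesis actually controls the $\mathbb{Q}$-span, not merely the fibers of the comparison map. That step deserves to be worked out before the proof can be called complete.
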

Note that this implies that if the map $Y \to Z$ between finite complexes induces
a rational equivalence, then the induced map $[X,Y] \to [X,Z]$ is also
finite-to-one.

\section{H-spaces} \label{S:H}
A pointed space $(H,o)$ is an H-space if it is equipped with a binary operation
$\add:H \times H \to H$ satisfying $\add(x,o)=\add(o,x)=x$ (the basepoint acts as
an identity).  In addition, an H-space is \emph{homotopy associative} if
\[\add \circ (\add,\id) \simeq \add \circ (\id,\add)\]
and \emph{homotopy commutative} if $\add \simeq \add \circ \tau$, where $\tau$ is
the ``twist'' map sending $(x,y) \mapsto (y,x)$.  We will interchangeably denote
our H-space operations (most of which will be homotopy associative and
commutative) by the usual binary operator $+$, as in $x+y=\add(x,y)$.

A classic result of Sugawara, see \cite[Theorem 3.4]{StaBook}, is that a homotopy
associative H-space which is a connected CW complex automatically admits a
\emph{homotopy inverse} $x \mapsto -x$ with the expected property
$\add(-x,x)=o=\add(x,-x)$.

Examples of H-spaces include topological groups and Eilenberg--MacLane spaces.
If $H$ is simply connected, then it is well-known that it has the rational
homotopy type of a product of Eilenberg--MacLane spaces.  Equivalently, from the
Sullivan point of view, $H$ has a minimal model $\mathcal{M}_H$ with zero
differential; see~\cite[\S12(a) Example 3]{FHT} for a proof.  On the other hand,
a product of H-spaces is clearly an H-space.  Therefore we can add ``$X$ is
rationally equivalent to an H-space'' to the list of equivalent conditions in
Prop.~\ref{prop:Q}.  We will generally use the sloppy phrase ``$X$ is a rational
H-space'' to mean the same thing.

It is easy to see that an H-space operation plays nice with the addition on
higher homotopy groups.  That is:
\begin{prop} \label{additive}
  Let $(H,o,\add)$ be an H-space.  Given $f,g:(S^n,*) \to (H,o)$,
  \[[f]+[g]=[\add \circ (f,g)] \in \pi_n(H,o).\]
\end{prop}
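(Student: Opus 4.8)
The plan is to recall the classical argument that the group operation on $\pi_n$ coincides with any ``pinch map'' group structure, and then observe that the H-space multiplication provides the same structure up to homotopy. First I would recall that for $n \geq 1$ the standard addition on $\pi_n(H,o)$ is defined using the pinch map $\nu: S^n \to S^n \vee S^n$ that collapses an equator, so that $[f]+[g]$ is the class of the composite $S^n \xrightarrow{\nu} S^n \vee S^n \xrightarrow{f \vee g} H \vee H \xrightarrow{\text{fold}} H$. On the other hand, the H-space sum $[\add \circ (f,g)]$ is the class of $S^n \xrightarrow{\Delta} S^n \times S^n \xrightarrow{f \times g} H \times H \xrightarrow{\add} H$, where $\Delta$ is the diagonal.

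The key step is to compare these two composites. The pair $(f,g): S^n \to H \times H$ factors through $H \vee H$ precisely up to the question of how the two coordinates overlap, so I would instead work directly with the map $S^n \xrightarrow{\Delta} S^n \times S^n$. Consider the self-map of $S^n \times S^n$ given by $(a,b) \mapsto \dots$; more cleanly, note that $\add \circ (f,g) = \add \circ (f \times g) \circ \Delta$, and that the restriction of $f \times g$ to $S^n \vee S^n \subseteq S^n \times S^n$ is $f \vee g$ in the obvious sense, while $\add$ restricted to $H \vee H$ is the fold map (by the H-space unit axiom $\add(x,o) = \add(o,x) = x$). So it suffices to show that $\Delta: S^n \to S^n \times S^n$ is homotopic, through maps into $S^n \times S^n$, to a map that factors through $S^n \vee S^n$ and represents the pinch construction in that subcomplex — equivalently, that the diagonal class and the pinch class agree in $\pi_n(S^n \times S^n)$. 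Since $\pi_n(S^n \times S^n) \cong \pi_n(S^n) \oplus \pi_n(S^n)$ detected by the two projections, and both $\Delta$ and the pinch-then-include map project to $\id_{S^n}$ on each factor, they are homotopic in $S^n \times S^n$ for $n \geq 2$ (and for $n=1$ one checks directly). Composing this homotopy with $\add \circ (f \times g)$ gives $[\add \circ (f,g)] = [f]+[g]$.

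I expect the main (minor) obstacle to be bookkeeping the homotopy in $\pi_n(S^n \times S^n)$ carefully enough that after postcomposition the pinch description literally matches the standard definition of $+$ in $\pi_n(H)$, rather than some a priori different but also-commutative operation; the Eckmann--Hilton-type argument guarantees uniqueness of the group structure when $n \geq 2$, so an alternative and perhaps cleaner route is to invoke Eckmann--Hilton: the set $\pi_n(H,o)$ for $n \geq 1$ already carries the usual group operation, and $\add$ induces a second unital binary operation on it which is a homomorphism for the first (since $\add$ is defined on the product), hence by Eckmann--Hilton the two operations coincide and are abelian. Either way the proof is short; I would present the Eckmann--Hilton version for $n \geq 2$ and remark that $n = 1$ follows because $H$ simply connected (or the relevant $H$-spaces here are) makes $\pi_1$ trivial, or handle $n=1$ by the direct pinch-map comparison above.
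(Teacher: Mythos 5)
The paper gives no proof of this proposition; it is asserted as ``easy to see.'' Your argument is correct, and either of the two routes you sketch establishes the claim. The Eckmann--Hilton version is the cleanest: the constant map at $o$ is a two-sided unit for the operation $[f]\cdot[g]=[\add\circ(f,g)]$ by the H-space axioms, and the interchange law $(a+b)\cdot(c+d)=(a\cdot c)+(b\cdot d)$ holds because precomposition with the pinch map $\nu$ commutes with postcomposition with $\add$ (equivalently, $\add_*:\pi_n(H\times H)\to\pi_n(H)$ is a homomorphism for the usual group structures). Eckmann--Hilton then forces $\cdot$ to agree with $+$.

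One small improvement: you need not treat $n=1$ as a special case in either approach. For the Eckmann--Hilton route, the argument works verbatim for $n=1$ (indeed it is the classical proof that $\pi_1$ of an H-space is abelian), so there is no need to invoke simple connectivity of $H$ — which the proposition does not assume. For the pinch-map route, $\pi_1(S^1\times S^1)\cong\mathbb{Z}\oplus\mathbb{Z}$ is detected by the two projections just as in higher dimensions, so the same identification of the diagonal with the pinch-then-include map goes through. Also, in that route one should state explicitly that $(f,g)=(f\times g)\circ\Delta$ and that the restriction of $\add$ to $H\vee H$ equals the fold map precisely by the unit axioms $\add(x,o)=\add(o,x)=x$; you do say this, and it is the point that makes the reduction to the statement in $\pi_n(S^n\times S^n)$ legitimate.
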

Another important and easily verified fact is the following:
\begin{prop}
  If $(H,o,\add)$ is a homotopy associative H-space, then for any pointed space
  $(X,*)$, the set $[X,H]$ forms a group, with the operation given by
  $[\ph]\cdot[\psi]=[\add \circ (\ph,\psi)]$.  If $H$ is homotopy commutative,
  then this group is likewise commutative.

  Moreover, suppose that $H$ is homotopy commutative, and let $A \to X$ be a
  cofibration (such as the inclusion of a CW subcomplex), and $f:A \to H$ a map
  with an extension $\tilde f:X \to H$.  Then the set $[X,H]^f$ of extensions of
  $f$ forms an abelian group with operation given by
  \[[\ph]+[\psi]=[\ph+\psi-\tilde f].\]
\end{prop}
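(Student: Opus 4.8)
The plan is to verify each assertion in turn, leaning on Proposition~\ref{additive} and the standard fact that $[X,H]$ is a set-valued homotopy functor on which $\add$ induces a natural composition. For the first claim, I would define the operation $[\ph]\cdot[\psi]=[\add\circ(\ph,\psi)]$ and check that it is well defined: if $\ph\simeq\ph'$ and $\psi\simeq\psi'$ via homotopies $\Phi,\Psi:X\times I\to H$, then $\add\circ(\Phi,\Psi)$ is a homotopy from $\add\circ(\ph,\psi)$ to $\add\circ(\ph',\psi')$. The identity element is the constant map at $o$, since $\add(x,o)=\add(o,x)=x$ gives $\add\circ(\ph,\operatorname{const}_o)=\ph$ on the nose. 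Associativity follows by precomposing the homotopy $\add\circ(\add,\id)\simeq\add\circ(\id,\add)$ with $(\ph,\psi,\chi):X\to H^3$; commutativity similarly by precomposing $\add\simeq\add\circ\tau$ with $(\ph,\psi)$. The existence of inverses is where I would invoke Sugawara's theorem (cited just above): a homotopy-associative connected CW H-space has a homotopy inverse $\nu:H\to H$ with $\add\circ(\nu,\id)\simeq\operatorname{const}_o\simeq\add\circ(\id,\nu)$, and then $[\nu\circ\ph]$ inverts $[\ph]$ by precomposing these homotopies with $\ph$.

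For the second claim, fix the extension $\tilde f:X\to H$ and define, for $\ph,\psi\in[X,H]^f$, the map $\ph+\psi-\tilde f:=\add\circ(\add\circ(\ph,\psi),\nu\circ\tilde f)$. First I would check this lands in $[X,H]^f$: on $A$ we have $\ph|_A=\psi|_A=\tilde f|_A=f$, so the restriction is $\add\circ(\add\circ(f,f),\nu\circ f)$, which is homotopic rel nothing—but we need it homotopic to $f$ \emph{through maps that restrict to $f$ on $A$}. Here is the subtle point: the homotopies provided by homotopy-commutativity, homotopy-associativity and the homotopy inverse are not rel $A$, so a priori the restriction to $A$ only equals $f$ up to free homotopy, not rel basepoint in the appropriate sense. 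The fix is to use that $A\to X$ is a cofibration: construct the homotopy on $X$ witnessing $\add\circ(\add\circ(f,f),\nu\circ f)\simeq f$ on $A$ first (this is a path in $[A,H]^f$, i.e.\ using the group structure on $[A,H]$ and the fact that $[f]+[f]-[f]=[f]$ there), then extend it over $X\times I$ using the homotopy extension property against the inclusion $(A\times I)\cup(X\times\{0\})\hookrightarrow X\times I$. This produces a representative of $\ph+\psi-\tilde f$ genuinely restricting to $f$ on $A$.

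Granting that, well-definedness on homotopy classes rel $A$ follows as before by applying $\add$ and $\nu$ to relative homotopies. That the operation is associative, commutative, and has $[\tilde f]$ as identity is a formal computation in the ambient abelian group structure: writing $a=[\ph],b=[\psi]$ and $z=[\tilde f]$ as elements manipulated via the (non-relative) group $[X,H]$, one has $a+b-z$ with identity $z$, inverse of $a$ equal to $2z-a$, and associativity $((a+b-z)+c-z)=a+(b+c-z)-z$; each such identity is witnessed by a homotopy on $X$ which I again push to be rel $A$ via the cofibration property, starting from the corresponding identity already known in $[A,H]^f$. The main obstacle is precisely this bookkeeping—systematically upgrading the freely-homotopic identities coming from the H-structure to homotopies rel $A$—and the clean way to organize it is to observe that $[X,H]^f$ is a torsor-like quotient: the map $[\ph]\mapsto[\ph]-[\tilde f]$ (difference taken in $[X,H]$, then checked to lie in the image of $[X/A,H]\to[X,H]$ and restricted back) identifies $[X,H]^f$ with the subgroup of $[X,H]$ of classes restricting trivially on $A$, transported to an affine copy by $+[\tilde f]$; all the group axioms then descend from $[X,H]$ at once. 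I would present the direct computation but remark that this torsor viewpoint is what makes it routine.
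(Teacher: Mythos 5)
The paper offers no proof of this proposition, introducing it as ``easily verified,'' so there is no paper argument to compare against. Your treatment of the first claim is correct and standard, and in the relative case you rightly flag the key subtlety: $\ph+\psi-\tilde f$ restricts on $A$ to $\add\circ(\add\circ(f,f),\nu\circ f)$, which agrees with $f$ only up to homotopy, so one must use the cofibration hypothesis to replace it by an honest extension of $f$.

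The gap is in the closing ``torsor viewpoint,'' which you offer as the clean way to organize the bookkeeping. You assert that $[\ph]\mapsto[\ph]-[\tilde f]$ ``identifies $[X,H]^f$ with the subgroup of $[X,H]$ of classes restricting trivially on $A$,'' i.e.\ with the image of $[X/A,H]\to[X,H]$. That is false, because this last map is usually far from injective, so passing through $[X,H]$ destroys information. Concretely, take $X=D^2$, $A=S^1=\partial D^2$, $H=S^2$, and $f$ the constant map: then $[X,H]^f\cong\pi_2(S^2)\cong\mathbb{Z}$, while $[X,H]=[D^2,S^2]$ is a single point, so no map $[X,H]^f\to[X,H]$ can be injective. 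The correct torsor statement replaces the image by $[X/A,H]$ itself: the group $[X/A,H]$ acts on $[X,H]^f$ by $[\alpha]\cdot[\ph]=[\add\circ(\alpha q,\ph)]$, where $q:X\to X/A$ is the quotient. This lands in $[X,H]^f$ on the nose (since $\alpha q|_A\equiv o$), is a genuine group action by homotopy associativity, and is free and transitive whenever $[X,H]^f\neq\emptyset$; choosing $[\tilde f]$ as a basepoint transports the abelian group structure of $[X/A,H]$ onto $[X,H]^f$ and recovers exactly the stated operation. Relatedly, your direct construction never checks that the class produced by the HEP step is independent of the chosen nullhomotopy on $A$ and of its extension to $X$; different choices differ by the action of $\pi_1(\Map(A,H),f)$ on $\pi_0$ of the fiber, which need not be trivial. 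The corrected torsor description sidesteps this ambiguity; routing through $[X,H]$ does not.
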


Throughout the paper, we denote the ``multiplication by $r$'' map
\[\underbrace{\id + \cdots + \id}_{r\text{ times}}:H \to H\]
by $\chi_r$.  The significance of this map is in the following lemmas, which we
will repeatedly apply to various obstruction classes:
\begin{lem} \label{lem:torsion}
  Let $H$ be an H-space of finite type, $A$ be a finitely generated coefficient
  group, and let $\alpha \in H^n(H;A)$ be a cohomology class of finite order.
  Then there is an $r>0$ such that $\chi_r^*\alpha=0$.
\end{lem}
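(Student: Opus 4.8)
The plan is to reduce the statement, by way of Bockstein exact sequences, to a computation inside the mod $p$ cohomology Hopf algebra of $H$, in which the multiplicativity relation $\chi_{rs}\simeq\chi_r\circ\chi_s$ can be used directly.

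First I would make two elementary reductions. Since $H$ has finite type, $H^n(H;\mathbb{Z})$ is finitely generated, so its torsion subgroup $T$ is a finite abelian group; the primary decomposition $T=\bigoplus_p T_p$ is natural, hence stable under every $\chi_r^*$, and since each $\chi_r^*$ is a homomorphism it maps each $T_p$ into itself, so the relation $\chi_{rs}^*=\chi_r^*\circ\chi_s^*$ lets me treat one prime at a time: it is enough to produce, for each prime $p$ with $T_p\neq 0$, a power $r_p$ of $p$ with $\chi_{r_p}^*(T_p)=0$, and then $r=\prod_p r_p$ works. Fix such a $p$. A class of order dividing $p$ in $H^n(H;\mathbb{Z})$ is the integral Bockstein $\widetilde\beta_p(\bar\alpha)$ of some $\bar\alpha\in H^{n-1}(H;\mathbb{Z}/p)$, so by naturality of the Bockstein, once I know $\chi_{p^N}^*$ vanishes on the whole reduced group $\widetilde H^{n-1}(H;\mathbb{Z}/p)$, it follows that $\chi_{p^N}^*$ annihilates every $p$-torsion class of $H^n(H;\mathbb{Z})$; and an easy induction on $e$, where $p^e$ is the exponent of $T_p$ (apply this successively to $p^{e-1}t$, then to $p^{e-2}\chi_{p^N}^*t$, and so on, for $t\in T_p$), then shows $\chi_{p^{Ne}}^*$ kills all of $T_p$.

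The core step is therefore: \emph{for each $j$ there is an $N$ with $\chi_{p^N}^*=0$ on $\widetilde H^{j}(H;\mathbb{Z}/p)$.} Here I would pass to the field $\mathbb{Z}/p$, over which $R:=H^*(H;\mathbb{Z}/p)$ is a connected graded-commutative Hopf algebra (the Künneth theorem applies because $H$ has finite type), with coproduct $\add^*$, and factor $\chi_p$ as the composite $H\xrightarrow{\delta_p}H^{\times p}\xrightarrow{\nabla_p}H$ of the $p$-fold diagonal followed by the $p$-fold iterated multiplication, so that $\chi_p^*=\delta_p^*\circ\nabla_p^*$ is an iterated cup product composed with an iterated coproduct. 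Since the restriction of $\nabla_p$ to each of the $p$ coordinate axes is the identity, for $x$ in the augmentation ideal $\overline R$ the class $\nabla_p^*x$ equals $\sum_{i=1}^p 1\otimes\cdots\otimes x\otimes\cdots\otimes 1$ plus terms supported on at least two tensor slots; applying $\delta_p^*$ then gives $\chi_p^*x=px+(\text{decomposables})=0+(\text{decomposables})$, since $p=0$ in $\mathbb{Z}/p$. Thus $\chi_p^*$ maps $\overline R$ into $\overline R^2$; being a ring map it then maps $\overline R^k$ into $\overline R^{2k}$, so $\chi_{p^N}^*(\overline R)=(\chi_p^*)^N(\overline R)\subseteq\overline R^{2^N}$, which vanishes in degree $j$ as soon as $2^N>j$. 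This is the claim, and with the reductions above it completes the proof, with $r=\prod_p p^{m_p}$ for suitable exponents $m_p$.

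I expect the mod $p$ computation to be the only real difficulty, and three points there deserve care. First, the coefficient ring must be the \emph{field} $\mathbb{Z}/p$: over $\mathbb{Z}/p^a$ or $\mathbb{Z}_{(p)}$ the Künneth theorem acquires $\Tor$ terms and $H^*(H)$ is no longer a Hopf algebra, which is exactly why the reduction through Bockstein sequences is needed. Second, one must check that $\chi_p^*$ is trivial on the indecomposables $\overline R/\overline R^2$ — this is the cohomological counterpart of Proposition~\ref{additive}, that an H-structure is additive on homotopy. Third, one must observe that a ring endomorphism trivial on indecomposables raises the decomposable length and is therefore locally nilpotent on $\overline R$. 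Everything else is routine homological bookkeeping, together with the relation $\chi_{rs}^*=\chi_r^*\circ\chi_s^*$, which relies on the homotopy associativity of $H$ already built into the definition of $\chi_r=\id+\cdots+\id$. One could instead induct up the Postnikov tower of $H$ — both $\alpha$ and the self-map $\chi_r$ descend to the H-space $H_n$ — but the Hopf-algebra route seems cleaner.
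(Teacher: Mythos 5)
Your proof is correct, and it takes a genuinely different route from the paper's. The paper's argument (attributed there to Shmuel Weinberger) localizes at the level of spaces: form the telescope $H[1/q]$ of $\chi_q$ where $q$ is the order of $\alpha$, observe via Proposition~\ref{additive} that its homotopy groups are $\mathbb{Z}[1/q]$-modules, invoke a theorem of Sullivan to transfer this to (co)homology, and then derive a contradiction from the fact that the sequence $\alpha, \chi_q^*\alpha, (\chi_q^*)^2\alpha, \ldots$ would have to cycle nontrivially and produce a nonzero $q$-torsion class in $\varprojlim H^n(H)$. You instead decompose at the level of coefficients: primary decomposition and Bockstein naturality reduce the claim to showing $\chi_{p^N}^*$ vanishes on $\widetilde H^{n-1}(H;\mathbb{Z}/p)$ for $N$ large, and that in turn follows from the Hopf-algebra observation that $\chi_p^*$ carries the augmentation ideal into its square, hence is locally nilpotent on the connected graded algebra $H^*(H;\mathbb{Z}/p)$. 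This is the cohomological shadow of the same fact (Proposition~\ref{additive}) that drives the paper's argument, but it trades Sullivan's localization machinery for classical Hopf-algebra and Bockstein bookkeeping. One concrete advantage of your route is worth noting: it is \emph{effective}. Unwinding the constants gives $r \leq q^{\lceil \log_2 n \rceil}$, whereas the paper remarks immediately after its proof that the telescope argument ``does not produce an effective bound on $t$,'' which is precisely the obstacle the paper cites to matching the complexity bounds of Vok\v r\'inek. Two minor points of precision: your intermediate sentence that ``$\chi_{p^N}^*$ annihilates every $p$-torsion class of $H^n(H;\mathbb{Z})$'' holds as stated only for classes of order dividing $p$; the subsequent induction on the exponent is exactly what repairs this, so the logic is sound, just slightly loosely phrased. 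And both your proof and the paper's rely on $\chi_{rs}^* = \chi_r^* \chi_s^*$, which for an H-space with no associativity hypothesis requires a coherent choice of bracketing; this is a shared implicit assumption rather than a gap particular to your argument, and it is harmless in the paper's applications, where $H$ is a product of Eilenberg--MacLane spaces and hence strictly associative.
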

In other words, faced with a finite-order obstruction, we can always get rid of
it by precomposing with a multiplication map.  Before giving the proof, we
develop a bit more of the theory:
\begin{lem} \label{lem:non-torsion}
  Let $H$ be a simply connected H-space of finite type.  Then for every $r>0$,
  \[\chi_r^*(H^*(H;\mathbb{Z})) \subseteq rH^*(H;\mathbb{Z})+\text{torsion}.\]
\end{lem}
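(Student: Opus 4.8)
The plan is to reduce the statement to a computation in the rational cohomology of $H$, where the H-space structure forces everything to be a product of Eilenberg--MacLane spaces, and then track what happens integrally. First I would recall that since $H$ is a simply connected H-space of finite type, its rational minimal model has zero differential (by the discussion following Proposition \ref{prop:Q}), so $H^*(H;\mathbb{Q})$ is a free graded-commutative algebra on generators $y_1,y_2,\dots$ dual to a basis of $\pi_*(H)\otimes\mathbb{Q}$. The key observation is how $\chi_r$ acts on these generators: an indecomposable generator $y$ of degree $n$ is (rationally) detected by a homotopy group element, and by Proposition \ref{additive} the map $\chi_r = \id + \cdots + \id$ multiplies $\pi_n(H)$ by $r$, hence $\chi_r^* y = r y$ on the generators of $H^*(H;\mathbb{Q})$. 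Therefore on a monomial $y_{i_1}\cdots y_{i_k}$ of degree $k$ in the generators, $\chi_r^*$ acts by multiplication by $r^k$, and since $k\geq 1$ on any positive-degree class, we get $\chi_r^*(H^{>0}(H;\mathbb{Q}))\subseteq r\,H^{>0}(H;\mathbb{Q})$.

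Next I would promote this rational statement to the integral one. Let $\alpha\in H^n(H)$ with $n>0$; write $\alpha_\mathbb{Q}$ for its image in $H^n(H;\mathbb{Q})$. By the previous paragraph, $\chi_r^*\alpha_\mathbb{Q} = r\beta_\mathbb{Q}$ for some $\beta_\mathbb{Q}\in H^n(H;\mathbb{Q})$. Since $H$ has finite type, $H^n(H)$ is a finitely generated abelian group, and the map $H^n(H)\to H^n(H;\mathbb{Q})$ has image a full lattice modulo torsion; in particular there is an integral class $\beta\in H^n(H)$ whose rationalization is a positive integer multiple of $\beta_\mathbb{Q}$ — but I want exactly $r\beta$, so more care is needed. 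The clean way: the rationalization map fits into the exact sequence coming from $0\to\mathbb{Z}\to\mathbb{Q}\to\mathbb{Q}/\mathbb{Z}\to 0$, and the torsion subgroup of $H^n(H)$ is precisely the kernel of $H^n(H)\to H^n(H;\mathbb{Q})$. So it suffices to show $\chi_r^*\alpha$ lies in $rH^n(H)+\ker(H^n(H)\to H^n(H;\mathbb{Q}))$. Choose a splitting $H^n(H)\cong \mathbb{Z}^m\oplus T$ with $T$ the torsion subgroup; under this splitting the free part injects into $H^n(H;\mathbb{Q})=\mathbb{Q}^m$ as the standard lattice. We have $\chi_r^*\alpha = (v, t)$ with $v\in\mathbb{Z}^m$, $t\in T$, and its rationalization is $v\in\mathbb{Q}^m$; but we computed this rationalization equals $r\beta_\mathbb{Q}$ for some $\beta_\mathbb{Q}\in\mathbb{Q}^m$. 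Since $v\in\mathbb{Z}^m$ and $v=r\beta_\mathbb{Q}$, we need $\beta_\mathbb{Q}$ to be integral; this is where I must be slightly more careful and argue that $\beta_\mathbb{Q}$ can be taken in the lattice.

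The cleanest fix — and I expect this to be the main technical point — is to observe that $\chi_r^*$ already lands in the lattice for structural reasons, not just that its rationalization is divisible. Concretely: the generators $y_i$ of $H^*(H;\mathbb{Q})$ can be chosen to be rationalizations of integral classes $Y_i\in H^{*}(H)$ (the free parts of cohomology generate the rational cohomology ring), and the subring $R\subseteq H^*(H;\mathbb{Q})$ they generate over $\mathbb{Z}$ is a lattice with $R\subseteq \img(H^*(H)\to H^*(H;\mathbb{Q}))$ modulo the obvious identification. On $R$, the computation $\chi_r^* Y_i = rY_i + (\text{torsion-valued correction})$ holds because $\chi_r^*$ is a ring map and $\chi_r^* y_i = ry_i$ rationally, so $\chi_r^* Y_i - rY_i$ rationalizes to zero, i.e. is torsion; and then $\chi_r^*$ of a degree-$k$ monomial in the $Y_i$ equals $r^k$ times that monomial plus torsion, hence lies in $rH^*(H) + \text{torsion}$ since $k\geq 1$. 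Finally, every class in $H^{>0}(H)$ is, modulo torsion, a $\mathbb{Z}$-linear combination of such monomials, so $\chi_r^*$ of it lies in $rH^*(H)+\text{torsion}$, which is the claim. The main obstacle throughout is bookkeeping the difference between "divisible after rationalization" and "divisible integrally"; resolving it requires committing to the ring-map argument on an explicit integral lattice of polynomial generators rather than arguing one class at a time.
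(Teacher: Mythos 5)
You correctly isolate the real subtlety, which the paper's terse proof also leaves implicit: showing that $\chi_r^*$ multiplies $H^n(H;\mathbb{Q})$ by $r^k$ on P\mbox{-}degree\mbox{-}$k$ classes is vacuous rationally, and the integral conclusion requires more. Your fix then rests entirely on the final claim that $H^{>0}(H;\mathbb{Z})$ modulo torsion is generated as a ring by the integral lifts $Y_i$ of the minimal-model generators, i.e.\ that your sublattice $R$ equals the full lattice $L$. You flag this as ``the main technical point'' but do not prove it, and it is false for a general simply connected H-space of finite type: $L$ can contain integral indecomposables whose rationalizations are decomposable. In fact the lemma as stated is false. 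Take $H = BU$ with the Whitney-sum H-space structure; then $\chi_r$ classifies $E \mapsto E^{\oplus r}$, so $\chi_r^* c = c^r$ on the total Chern class, and in particular $\chi_2^*(c_2) = 2c_2 + c_1^2 \notin 2H^4(BU;\mathbb{Z})$, with $H^*(BU;\mathbb{Z})$ torsion-free so there is nothing to absorb the discrepancy. Here $c_2$ rationalizes to $\tfrac{1}{2}(s_1^2 - s_2)$, a mix of P-degrees $1$ and $2$, and $R = \mathbb{Z}[s_1,s_2,\ldots]$ sits with index $2$ in $L = \mathbb{Z}[c_1,c_2,\ldots]$ in degree $4$.

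That said, your argument and the lemma are both correct in the only case the paper actually uses it, namely $H = \prod_n K(\pi_n(Y),n)$. For such $H$ the fundamental classes $\iota_n$ of the $K(\mathbb{Z},n)$ factors are honest integral primitive generators, and $H^*(H;\mathbb{Z})/\text{torsion}$ is the free graded-commutative algebra on them (by K\"unneth and the known ring structure of $H^*(K(\mathbb{Z},n);\mathbb{Z})$), so $R = L$ and your monomial computation closes. The clean resolution is either to restrict the lemma to products of Eilenberg--MacLane spaces, or to add the hypothesis that $H^*(H;\mathbb{Z})/\text{torsion}$ is generated by classes rationalizing to indecomposables; without such a hypothesis neither your argument nor the paper's one-line proof establishes the integral statement.
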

\begin{proof}
  By Prop.~\ref{additive}, $\chi_r$ induces multiplication by $r$ on $\pi_n(H)$.
  Therefore by Prop.~\ref{prop:Q}(c), it induces multiplication by $r$ on the
  indecomposables of the minimal model $\mathcal{M}_H$.  Therefore it induces
  multiplication by some $r^k$ on every class in $H^n(H;\mathbb{Q})$.
\end{proof}
Combining the two lemmas gives us a third:
\begin{lem} \label{lem:both}
  Let $H$ be a simply connected H-space of finite type and $A$ a finitely
  generated coefficient group.  Then for any $r>0$ and any $n>0$, there is an
  $s>0$ such that
  \[\chi_s^*(H^n(H);A) \subseteq rH^n(H;A).\]
\end{lem}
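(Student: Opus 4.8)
The plan is to combine the two previous lemmas by choosing the multiplier in two stages, first to clear out a multiple-of-$r$ amount of the free part and then to kill the residual torsion. Since $H^n(H)$ is a finitely generated abelian group (as $H$ has finite type), write $H^n(H) \cong \mathbb{Z}^b \oplus T$ with $T$ the finite torsion subgroup. First I would apply Lemma \ref{lem:non-torsion} with the multiplier $r$ itself: this gives $\chi_r^*(H^n(H)) \subseteq rH^n(H) + T$. So $\chi_r^*$ already lands in the correct subgroup modulo torsion; the only obstruction is the torsion part.

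Next I would deal with the torsion. Consider the composite $\chi_r^* $ followed by the projection $H^n(H) \to H^n(H)/rH^n(H)$; its image lies in the image of $T$ in this quotient, which is a finite group. More directly: the finite-order classes that can appear in $\chi_r^*(H^n(H))$ all lie in $T$, which is finite, so there are only finitely many of them. By Lemma \ref{lem:torsion}, for each finite-order class $\alpha$ there is an integer $r_\alpha > 0$ with $\chi_{r_\alpha}^* \alpha = 0$; taking $t$ to be the least common multiple of the finitely many $r_\alpha$ needed (equivalently, applying Lemma \ref{lem:torsion} to a single generator of the cyclic group generated by all the offending classes, or just to all of $T$), we get that $\chi_t^*$ annihilates every torsion class in $H^n(H)$. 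Strictly I want this for the torsion classes in the image $\chi_r^*(H^n(H))$, but since that image is contained in $H^n(H)$ and its torsion subgroup is contained in $T$, a single $t$ works.

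Finally I would set $s = rt$ and use the fact that $\chi_{rt} \simeq \chi_t \circ \chi_r$ (this is immediate from Proposition \ref{additive} on homotopy groups together with the Sullivan-model description, or more elementarily by a direct homotopy using homotopy associativity and commutativity of the H-space operation), so $\chi_s^* = \chi_r^* \circ \chi_t^*$. Wait — I should be careful about the order: I want $\chi_r^*$ applied last so as to use Lemma \ref{lem:non-torsion}, but I also want $\chi_t^*$ to kill the torsion that $\chi_r^*$ produces, so the right composite is $\chi_s = \chi_t \circ \chi_r$, giving $\chi_s^* = \chi_r^* \circ \chi_t^*$ — no. Let me instead reorganize: apply $\chi_r$ first (so $\chi_r^*$ acts last). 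Then for $x \in H^n(H)$ we have $\chi_s^* x = \chi_t^*(\chi_r^* x)$, and $\chi_r^* x = r y + \tau$ with $y \in H^n(H)$, $\tau \in T$. Then $\chi_t^*(\chi_r^* x) = \chi_t^*(ry) + \chi_t^*(\tau) = r\,\chi_t^*(y) + 0 \in rH^n(H)$, as desired. So $s = rt$ with $\chi_s \simeq \chi_r \circ \chi_t$ works cleanly.

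The only real subtlety — and the step I would flag — is justifying $\chi_{rt} \simeq \chi_r \circ \chi_t$ up to homotopy in a homotopy-associative, homotopy-commutative H-space; this is standard but does require that the various ways of adding $rt$ copies of the identity are homotopic, which follows from iterated use of the homotopy associativity and commutativity axioms. Everything else is elementary finiteness of $H^n(H)$ together with the two lemmas already proved.
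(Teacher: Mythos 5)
Your proof is correct and supplies exactly the details the paper leaves implicit: the paper simply asserts that Lemma~\ref{lem:both} follows by ``combining the two lemmas'' without spelling out the composition argument. The one step you flag---that $\chi_{rt} \simeq \chi_r \circ \chi_t$ in a homotopy-associative H-space, so that $\chi_{rt}^* = \chi_t^* \circ \chi_r^*$---is the right subtlety to check, and the paper itself relies on the same identification in the proof of Lemma~\ref{lem:torsion}, where $(\chi_q^*)^t$ is silently treated as $\chi_{q^t}^*$.
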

\begin{proof}[Proof of Lemma \ref{lem:torsion}.]
  I would like to thank Shmuel Weinberger for suggesting this proof.

  Let $q$ be the order of $\alpha$.  By Prop.~\ref{additive}, for $f:S^k \to H$,
  $(\chi_q)_*[f]=q[f]$.

  Let $H[1/q]$ be the universal cover of the mapping torus of $\chi_q$; this
  should be thought of as an infinite mapping telescope.  By the above, the
  homotopy groups of $H[1/q]$ are $\mathbb{Z}[1/q]$-modules (the telescope
  localizes them away from $q$).  This implies, by \cite[Thm.~2.1]{SulLoc}, that
  the reduced homology groups are also $\mathbb{Z}[1/q]$-modules.  To understand
  the cohomology groups, we use the exact sequence
  \[0 \to \Ext(H_{n-1}(H[1/q]),A) \to H^n(H[1/q];A) \to \Hom(H_{n-1}(H[1/q]),A) \to
  0\]
  coming from the universal coefficient theorem.  If $M$ is a
  $\mathbb{Z}[1/q]$-module, then so is $\Hom(M,G)$ for any abelian group $G$: for
  any homomorphism $h$, we take $[h/q](m)=h(m/q)$.  Since $\Ext(M,G)$ is the
  first homology of the chain complex
  \[0 \to \Hom(M,I^0) \to \Hom(M,I^1) \to \cdots,\]
  where $I^*$ is an injective resolution of $G$, it is also a
  $\mathbb{Z}[1/q]$-module.  It follows that $H^n(H[1/q];A)$ is a
  $\mathbb{Z}[1/q]$-module.  Now, by the Milnor exact sequence \cite{MilAHT}, the
  map
  \[H^n(H[1/q];A) \to \varprojlim\bigl(\cdots \xrightarrow{\chi_q^*} H^n(H;A)
  \xrightarrow{\chi_q^*} H^n(H;A)\bigr)\]
  is surjective, and hence this inverse limit is also a $\mathbb{Z}[1/q]$-module.

  Now we would like to show that for some $t$, $(\chi_q^*)^t\alpha=0$, so that we
  can take $r=q^t$.  Suppose not, so that $(\chi_q^*)^t\alpha$ is nonzero for
  every $t$.  Clearly every element in the sequence
  \[\alpha,\chi_q^*\alpha,(\chi_q^*)^2\alpha,\ldots\]
  has order which divides $q$; moreover, since there are finitely many such
  elements, the sequence eventually cycles.  Extrapolating this cycle backward
  gives us a nonzero element of the inverse limit above, which likewise has order
  dividing $q$.  This contradicts the fact that this inverse limit is a
  $\mathbb{Z}[1/q]$-module.
\end{proof}
Note that this proof does not produce an effective bound on $t$.  This prevents
our algorithmic approach from yielding results that are as effective as those of
Vok\v{r}\'inek in \cite{Vok}.

We will also require the similar but more involved fact.
\begin{lem} \label{lem:product}
  Let $(H,o,\add)$ be a simply connected H-space of finite type, $U$ another
  space of finite type, $A$ a finitely generated coefficient group, and $n>0$.
  \begin{enumerate}[(i)]
  \item Suppose that $\alpha \in H^n(H \times U,o \times U;A)$ is torsion.  Then
    there is an $r>0$ such that $(\chi_r,\id)^*\alpha=0$.
  \item Let $\alpha \in H^n(H \times U,o \times U;\mathbb Z)$.  Then for every
    $r>0$,
    \[(\chi_r,\id)^*\alpha \in rH^n(H \times U,o \times U;\mathbb Z)+\text{torsion}.\]
  \item For every $r>0$ there is an $s>0$ such that
    \[(\chi_s,\id)^*H^n(H \times U,o \times U;A) \subseteq rH^n(H \times U,o \times U;A).\]
  \end{enumerate}
\end{lem}
\begin{proof}
  Let where $i_2:U \to H \times U$ is the inclusion $u \mapsto (o,u)$.  We first
  note that since the map $i_2^*$ on cohomology is a surjection in every degree,
  $H^n(H \times U,o \times U;A)=\ker i_2^*$.  Thus we can equivalently prove
  parts (i) and (ii) for an $\alpha \in H^n(H \times U;A)$ such that
  $i_2^*\alpha=0$.  We use several not-quite-standard algebraic topology facts
  which can be found in \cite[\S5.5]{Spa}.

  We first consider $A=\mathbb Z$.  For this we use the K\"unneth formula for
  cohomology, which gives a natural short exact sequence
  \begin{equation} \label{Kunneth}
    0 \to \bigoplus_{k+\ell=n} H^k(H) \otimes H^\ell(U) \to H^n(H \times U) \to
    \bigoplus_{k+\ell=n+1} \Tor(H^k(H),H^\ell(U)) \to 0.
  \end{equation}
  To demonstrate (i), we will first show that we can choose an $r_0$ such that
  $(\chi_{r_0},\id)^*\alpha$ is in the image of
  $\bigoplus_{k+\ell=n} H^k(H) \otimes H^\ell(U)$; in other words, such that the
  projection of $(\chi_{r_0},\id)^*\alpha$ to
  $\bigoplus_{k+\ell=n+1} \Tor(H^k(H),H^\ell(U))$ is zero.  To see this, recall that
  for cyclic groups $A$ and $B$, $\Tor(A,B)$ is trivial unless both $A$ and $B$
  are finite, and that the $\Tor$ functor distributes over direct sum.  Therefore
  $\Tor(H^k(H),H^\ell(U))$ is generated by elementary tensors $\eta \otimes \nu$
  where $\eta \in H^k(H)$ and $\nu \in H^\ell(U)$ are torsion elements.  By Lemma
  \ref{lem:torsion}, for each such elementary tensor, we can pick $r(\eta)$ such
  that $\chi_{r(\eta)}^*\eta=0$ and therefore
  \[(\chi_{r(\eta)},\id)^*(\eta \otimes \nu)=0 \in \Tor(H^k(H),H^\ell(U)).\]
  We then choose $r_0$ to be the least common multiple of all the $r(\eta)$'s.

  Now fix a decomposition of each $H^k(H)$ and $H^\ell(U)$ into cyclic factors to
  write $(\chi_{r_0},\id)^*\alpha$ as a sum of elementary tensors.  Since
  $i_2^*\alpha=0$, $(\chi_{r_0},\id)^*\alpha$ has no summands of the form
  $1 \otimes u$; moreover, each summand is itself torsion.  For every other
  elementary tensor $h \otimes u$, we can use Lemma \ref{lem:torsion} (if $h$ is
  torsion) or Lemma \ref{lem:both} (otherwise, since then $u$ is torsion) to find
  an $s(h,u)$ such that $\chi_{s(h,u)}^*h \otimes u=0$.

  Finally, we can take $r$ to be the product of $r_0$ with the least common
  multiple of the $s(h,u)$'s.  This completes the proof of (i) for $A=\mathbb Z$.

  To see (ii), we use the fact that the K\"unneth sequence \eqref{Kunneth}
  splits, albeit non-naturally.  Therefore, since we
  are ignoring torsion, we may assume
  $\alpha \in \bigoplus_{k+\ell=n} H^k(H) \otimes H^\ell(U)$.   Applying Lemma
  \ref{lem:non-torsion} to $H^k(H)$ for all $0<k<n$, we get the result.

  Finally, (iii) with integer coefficients follows from (i) and (ii).

  Now we need to handle other coefficient groups.  We can assume $A$ is a finite
  cyclic group, since everything we need commutes with direct sums.  For this
  case we use a version of the universal coefficient theorem which states that
  \[0 \to H^n(H \times U) \otimes A \to H^n(H \times U;A) \to
  \Tor(H^{n+1}(H \times U),A) \to 0\]
  is an exact sequence.  Let $\alpha \in H^n(H \times U,o \times U;A)$ be
  torsion.  We use the same outline as before to show that (i) holds.  First we
  see that there is an $r_0$ such that $(\chi_{r_0},\id)^*\alpha$ is in the kernel
  of the map to $\Tor(H^{n+1}(H \times U),A)$; this follows from the integral
  case of (i) and the fact that $\Tor(G,A)$ contains only the $A$-torsion
  elements of $G$.  Next we see that the preimage of $(\chi_{r_0},\id)^*\alpha$ in
  $H^n(H \times U) \otimes A$ is also annihilated by some $(\chi_{r_1},\id)^*$;
  this follows from the integral case of (iii).  Then
  $(\chi_{r_0r_1},\id)^*\alpha=0$.

  The general case of (iii) again follows from (i) and (ii).
\end{proof}

\section{The algebraic structure of $[X,Y]^f$} \label{S:XYA}

We start by constructing the desired structure on $[X,Y]^f$ when $Y$ is a
rational H-space.  From the previous section, such a $Y$ is rationally equivalent
to a product of Eilenberg--MacLane spaces.  In particular, it is rationally
equivalent to $H=\prod_{n=2}^\infty K(\pi_n(Y),n)$, which we give the product
H-space structure.  We will harness this to prove the following result.
\begin{thm} \label{thm:XYA}
  Suppose that $Y$ is a rational H-space through dimension $d$, denote by $Y_d$
  the $d$th Postnikov stage of $Y$, and let $H_d=\prod_{n=2}^\infty K(\pi_n(Y),n)$.
  Suppose $(X,A)$ is a finite simplicial pair and $f:A \to Y$ a map.  Then
  $[X,Y_d]^f$ admits a virtually free and faithful action by $[X,H_d]^f$ induced
  by a map $H_d \to Y_d$.
\end{thm}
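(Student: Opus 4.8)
The plan is to build the map $\YtoK : H_d \to Y_d$ together with an action map $\act : H_d \times Y_d \to Y_d$ by induction up the Postnikov tower, one stage at a time, and then to verify that the induced set-level action of $[X,H_d]^f$ on $[X,Y_d]^f$ is virtually free and faithful using the rational equivalence. First I would fix the rational equivalence $Y_d \to H_d$ coming from Prop.~\ref{prop:Q}: since $Y$ is a rational H-space through dimension $d$, the $d$th Postnikov stage $Y_d$ has a minimal model with zero differential in degrees $\le d$, hence $Y_d$ is rationally a product of Eilenberg--MacLane spaces, and there is a map $\YtoH : Y_d \to H_d$ which is a rational equivalence. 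Composing with a rational homotopy inverse (available up to a multiplication map $\chi_r$, by the finiteness results and Lemmas~\ref{lem:torsion}--\ref{lem:both}) produces $\YtoK : H_d \to Y_d$ which is also a rational equivalence. The subtle point is that a strict two-sided inverse does not exist integrally; one gets maps whose composites are $\chi_r$ on homotopy groups, and I would carry this ``multiplication by $r$'' slack through the whole argument rather than trying to eliminate it.

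Next I would construct $\act : H_d \times Y_d \to Y_d$ by induction on the Postnikov stages of $Y$. At stage $n \le d$, we have $(Y_d)_n$ built from $(Y_d)_{n-1}$ by a $k$-invariant $k_n \in H^{n+1}((Y_d)_{n-1}; \pi_n(Y))$, and likewise $H_d$ is the honest product, so its $n$th stage is $(H_d)_{n-1} \times K(\pi_n(Y), n)$ with trivial $k$-invariant. Given the map on $(n-1)$-stages $\act_{n-1} : (H_d)_{n-1} \times (Y_d)_{n-1} \to (Y_d)_{n-1}$ together with $\YtoK_{n-1}$, the obstruction to lifting $\act_{n-1}$ to the $n$-stage of the target is the difference between $k_n$ pulled back along $\act_{n-1}$ and along the projection $(H_d)_{n-1} \times (Y_d)_{n-1} \to (Y_d)_{n-1}$; this obstruction restricts to zero on $\ast \times (Y_d)_{n-1}$ (where $\act_{n-1}$ is the identity) and also rationally vanishes on $(H_d)_{n-1} \times \ast$ because $\YtoK$ is a rational equivalence and $H_d$ carries an H-structure compatible with the one on homotopy groups (Prop.~\ref{additive}). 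So the obstruction is torsion on the $H_d$-factor with vanishing $i_2^*$, and Lemma~\ref{lem:product}(i) (applied with the roles of the two factors set so that $U$ is the $Y$-factor) lets us kill it after precomposing with $(\chi_r, \id)$ on the $H_d$-factor. Replacing $H_d$ by $H_d$ precomposed with a single $\chi_N$ absorbing all the $r$'s over the finitely many stages $2 \le n \le d$, we obtain honest maps $\YtoK$ and $\act$, where $\act$ restricts to the identity on $\ast \times Y_d$ and to $\YtoK$ on $H_d \times \ast$ (up to fixing basepoints), and satisfies the H-action compatibility $\act \circ (\add, \id) \simeq \act \circ (\id, \act)$ by the same stagewise obstruction-theoretic argument, using that the corresponding obstructions are again torsion and $i_2$-trivial.

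Given $\act$, the set $[X,H_d]^f$ — which is an abelian group by the Proposition on H-spaces, using that $H_d$ is homotopy commutative and that $f$ composed with $Y_d \to H_d$ has the obvious extension — acts on $[X,Y_d]^f$ by $[\varphi] \cdot [\psi] = [\act \circ (\varphi, \psi)]$, where I abuse notation to let $\varphi$ denote a map $X \to H_d$ sending $A$ to the basepoint; the action axioms follow from the compatibility properties of $\act$, and the restriction-to-$A$ condition is preserved since $\act$ fixes the basepoint fiberwise. It remains to check the action is \emph{virtually free and faithful}, i.e.\ has finitely many orbits each with finite stabilizer. For this I would compare with the rationalization: $[X, (H_d)_{(0)}]$ acts simply transitively on each nonempty fiber of $[X,(Y_d)_{(0)}]^f \to [X,\ast]$ because $\YtoK$ is a rational equivalence and rationally $\act$ is an honest principal action of the rational H-space on itself (both sides become products of rational Eilenberg--MacLane spaces and the action becomes genuine addition). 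Then Sullivan's finiteness theorem, in the relative form noted after its statement, says the rationalization map $[X,Y_d]^f \to [X,(Y_d)_{(0)}]^f$ and $[X,H_d] \to [X,(H_d)_{(0)}]$ are finite-to-one; a diagram chase combining a finite-to-one equivariant map to a free, transitive-on-fibers action with finitely many rational components gives finitely many orbits and finite stabilizers. The main obstacle is the stagewise construction of $\act$ together with its associativity/unitality coherences: each individual obstruction lives in a cohomology group to which Lemma~\ref{lem:product} applies, but one must be careful that the choices made to kill the lifting obstruction at stage $n$ do not destroy the compatibilities already arranged at stage $n-1$, so the induction really has to carry the map, the H-action datum, and the (homotopy) coherence witnesses all at once, with a single uniform multiplication $\chi_N$ chosen at the end.
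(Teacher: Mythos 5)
Your overall strategy mirrors the paper's (§4, §2.2): build compatible maps $H_d \to Y_d$ and an H-space action by Postnikov induction, kill torsion obstructions with multiplication maps, and transfer a virtually free and faithful action via Sullivan finiteness. But there are two substantive gaps.

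First, in the Postnikov induction you construct $\act_n$ only subject to the conditions that it restrict to the identity on $\ast \times Y_n$ and (roughly) to $\YtoK$ on $H_n \times \ast$. That is much weaker than what the paper's diagram \eqref{H:xr} enforces, namely that $\act_n$ restrict along $(\id,\HtoY_n): H_n \times H_n \to H_n \times Y_n$ to the H-space addition $\add_n$ (up to $\chi_{r_n}$), and on the other side that $\YtoH_n \circ \act_n \simeq \add_n \circ (\chi_{r_n},\YtoH_n)$. To get the first of these, the paper poses a \emph{relative} lifting-extension problem with subspace $H_n \times H_n \hookrightarrow H_n \times Y_n$ already mapped via $\add_n$; the obstruction then lives in $H^{n+1}(H_n\times Y_n, H_n\times H_n;\pi_n(Y))$, which is torsion because $(\id,\HtoY_n)$ is a rational equivalence of pairs, and is killed using the long exact sequence and both parts (i) and (iii) of Lemma~\ref{lem:product}. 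You only invoke the absolute obstruction and Lemma~\ref{lem:product}(i), after observing two restrictions vanish; besides being insufficient to yield the compatibility with $\add_n$, the claim that vanishing on $\ast\times Y$ plus rational vanishing on $H\times\ast$ forces the class to be torsion is false in general (K\"unneth cross-terms $h\otimes u$ with both nontorsion restrict to zero on both axes). Here you are saved only by the external fact that $k_n$ itself is torsion, not by the two restrictions.

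Second, your finiteness argument compares directly with rationalizations. Simple transitivity of the rational action plus Sullivan finiteness does give \emph{finite stabilizers}, but not \emph{finitely many orbits}: the group $[X/A,H_d]$ is a lattice inside $[X,(H_d)_{(0)}]\cong[X/A,H_d]\otimes\mathbb{Q}$, and a lattice acting on a $\mathbb{Q}$-vector-space torsor has infinitely many orbits, so the phrase ``finitely many rational components'' has no justification. The paper's argument instead pushes forward along $\YtoH_d$ to the \emph{integral} set $[X,H_d]^{\YtoH_df}$; by \eqref{H:xr} the resulting $[X/A,H_d]$-action is $[\ph]\cdot[\psi]=r_d[\ph]+[\psi]$, which is manifestly virtually free and faithful since $r_d[X/A,H_d]$ has finite index. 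Sullivan finiteness then makes $\YtoH_d\circ -$ finite-to-one and equivariant, which transfers the property back to $[X,Y_d]^f$. This is exactly where the right square of \eqref{H:xr} — the compatibility you did not arrange — is needed, so the two gaps are linked.
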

The proof of this theorem occupies the rest of the section.  Later, in
\S\ref{S:XYAcomp}, we give an algorithm for computing this action which closely
mirrors this proof.  Before beginning the proof of Theorem \ref{thm:XYA}, we see
how such an algorithm would also provide the algorithms whose existence is
asserted in Theorem \ref{main}.

If $(X,A)$ has cohomological dimension $d+1$, then there is no obstruction to
lifting an extension $X \to Y_d$ of $f$ to $Y$, as the first obstruction lies in
$H^{d+2}(X,A;\pi_{d+1}(Y)) \cong 0$.  Therefore $[X,Y]^f$ is nonempty if and only
if $[X,Y_d]^f$ is nonempty.

If $(X,A)$ has cohomological dimension $d$, then in addition every such lift is
unique: the first obstruction to homotoping two lifts lies in
$H^{d+1}(X,A;\pi_{d+1}(Y)) \cong 0$.  Therefore $[X,Y]^f \cong [X,Y_d]^f$.

\subsection{An H-space action on $Y_n$}
Denote the $n$th Postnikov stages of $Y$ and $H$ by $Y_n$ and $H_n$,
respectively, and the H-space zero and multiplication on $H_n$ by $o_n$ and by
$+$ or $\add_n:H_n \times H_n \to H_n$.  We will inductively construct the
following additional data:
\begin{enumerate}[(i)]
\item Maps $H_n \xrightarrow{\HtoY_n} Y_n \xrightarrow{\YtoH_n} H_n$ inducing
  rational equivalences such that $\YtoH_n\HtoY_n$ is homotopic to the
  multiplication map $\chi_{r_n}$ for some integer $r_n$.
\item A map $\act_n:H_n \times Y_n \to Y_n$ defining an \emph{H-space action},
  (that is such that $\act_n(\zerosec_n,x)=x$ and the diagram
  \begin{equation} \label{H:action}
    \begin{gathered}
    \xymatrixcolsep{3pc}
    \xymatrix{
      H_n \times H_n \times Y_n \ar[r]^-{(\add_n,\id)} \ar[d]^{(\id,\act_n)} &
      H_n \times Y_n \ar[d]^{\act_n} \\
      H_n \times Y_n \ar[r]^-{\act_n} & Y_n
    }
    \end{gathered}
  \end{equation}
  commutes up to homotopy) which is ``induced by $\HtoY_n$'' in the sense of the
  homotopy commutativity of
  \begin{equation} \label{H:xr}
  \begin{gathered}
    \xymatrix{
      H_n \times H_n \ar[r]^{(\id,\HtoY_n)} \ar[d]^{\add_n} &
      H_n \times Y_n \ar[r]^{(\chi_{r_n},\YtoH_n)} \ar[d]^{\act_n} &
      H_n \times H_n \ar[d]^{\add_n} \\
      H_n \ar[r]^{\HtoY_n} & Y_n \ar[r]^{\YtoH_n} & H_n.
    }
  \end{gathered}
  \end{equation}
\end{enumerate}
Note that when we pass to rationalizations, the existence of such a structure is
obvious: one takes $\HtoY_{n(0)}$ to be the identity, $\act_{n(0)}=\add_{n(0)}$, and
$\YtoH_{n(0)}$ to be multiplication by $r_n$.

\subsection{The action of $[X/A,H_d]$ on $[X,Y_d]^f$} \label{S:VFF}
Now suppose that we have constructed the above structure.  Then $\add_d$ induces
the structure of a finitely generated abelian group on the set $[X/A,H_d]$, which
we identify with the set of homotopy classes of maps $X \to H_d$ sending $A$ to
$\zerosec_d \in H_d$.  Moreover, this group acts on $[X,Y_d]^f$ via the action
$[\ph]\cdot[\psi]=[\act_d\circ(\ph,\psi)]$.

It remains to show that this action is virtually free and faithful.  Indeed,
notice that pushing this action forward along $\YtoH_d$ gives the action of
of $[X/A,H_d]$ on $[X,H_d]^{\YtoH_df}$ via $[\ph] \cdot [\psi]=r_d[\ph]+[\psi]$,
which is clearly virtually free and faithful.  This implies that the action on
$[X,H_d]^{v_df}$ is virtually free.  Moreover, the map
$\YtoH_d \circ{}:[X,Y_d]^f \to [X,H_d]^{\YtoH_df}$ is finite-to-one by Sullivan's
finiteness theorem.  Thus the action on $[X,Y_d]^f$ is also virtually faithful.

\subsection{The Postnikov induction} \label{S:2.2}
Now we construct the H-space action.  For $n=1$ all the spaces are points and all
the maps are trivial.  So suppose we have constructed the maps $\HtoY_{n-1}$,
$\YtoH_{n-1}$, and $\act_{n-1}$, and let $k_n:Y_{n-1} \to K(\pi_n(Y),n+1)$ be the
$n$th $k$-invariant of $Y$.  For the inductive step, it suffices to prove the
following lemma:
\begin{lem}
  There is an integer $q>0$ such that we can define $\HtoY_n$ to be a lift of
  $\HtoY_{n-1}\chi_q$, and construct $\YtoH_n$ and a solution
  $\act_n:H_n \times Y_n \to Y_n$ to the homotopy lifting-extension problem
  \begin{equation} \label{XYA:a}
    \begin{gathered}
      \xymatrix{
        H_n \times H_n \ar[d]_{(\id, \HtoY_n)} \ar[rr]^-{\add_n}
        && H_n \ar[r]^{\HtoY_n} & Y_n \ar@{->>}[d] \\
        H_n \times Y_n \ar@{-->}[rrru]|-{\act_n} \ar[r]_{(\chi_q,\id)}
        & H_n \times Y_n \ar@{->>}[r]
        & H_{n-1} \times Y_{n-1} \ar[r]^-{\act_{n-1}} & Y_{n-1}
      }
    \end{gathered}
  \end{equation}
  so that the desired conditions are satisfied.
\end{lem}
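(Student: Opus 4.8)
The plan is to realize the whole diagram of maps and homotopies rationally — where, as observed above, everything is formal: since $n\le d$, $Y_d$ is rationally a product of Eilenberg--MacLane spaces (Prop.~\ref{prop:Q}), so $Y_{n(0)}\simeq Y_{n-1(0)}\times K(\pi_n(Y)\otimes\mathbb Q,n)$ and one may take $\HtoY_{n(0)}=\id$, $\act_{n(0)}=\add_{n(0)}$, $\YtoH_{n(0)}=\chi_{r_n}$ — and then to check that every obstruction to lifting this picture back to honest spaces is a torsion cohomology class, which can be annihilated all at once by precomposing with a single multiplication map $(\chi_q,\id)$, via Lemmas~\ref{lem:torsion} and \ref{lem:product}.

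First I would build $\HtoY_n$. Since $n\le d$, the $k$-invariant $k_n\colon Y_{n-1}\to K(\pi_n(Y),n+1)$ is rationally nullhomotopic; as $\HtoY_{n-1}$ is a rational equivalence of finite-type spaces, the class $\HtoY_{n-1}^{\,*}k_n\in H^{n+1}(H_{n-1};\pi_n(Y))$ therefore has finite order, and by Lemma~\ref{lem:torsion} (applied to each cyclic summand of $\pi_n(Y)$) some $\chi_{q_1}^{\,*}$ kills it. Then $\HtoY_{n-1}\chi_{q_1}$, pulled back along $H_n\twoheadrightarrow H_{n-1}$, lifts through the principal fibration $Y_n\twoheadrightarrow Y_{n-1}$, and since lifts form a torsor over $H^n(H_n;\pi_n(Y))$ one may arrange $\HtoY_n$ to restrict to multiplication by a prescribed integer on the factor $K(\pi_n(Y),n)\subseteq H_n$, making it again a rational equivalence. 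Then $\YtoH_n$: a rational inverse of $\HtoY_{n(0)}$, composed with rationalization and scaled to clear denominators, gives — after replacing $q_1$ by a suitable multiple $q$, with $r_n=r_{n-1}q$ — a map $\YtoH_n\colon Y_n\to H_n$ restricting on the $H_{n-1}$-factor to $\YtoH_{n-1}$ precomposed with $Y_n\twoheadrightarrow Y_{n-1}$ (so compatibility with stage $n-1$ holds) and with $\YtoH_n\HtoY_n$ rationally equal to $\chi_{r_n}$. As $\YtoH_n\HtoY_n$ and $\chi_{r_n}$ are self-maps of a product of Eilenberg--MacLane spaces agreeing rationally, their difference lies in the torsion subgroup of $[H_n,H_n]=\prod_m H^m(H_n;\pi_m(Y))$, and a further componentwise use of Lemma~\ref{lem:torsion} (enlarging $q$) yields $\YtoH_n\HtoY_n\simeq\chi_{r_n}$ on the nose.

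It remains to produce $\act_n$ solving \eqref{XYA:a}. Using \eqref{H:xr} at stage $n-1$ one verifies that $\HtoY_n\add_n$ is a partial lift, over $H_n\times H_n$, of the bottom row $\act_{n-1}\circ(\text{proj})\circ(\chi_q,\id)$ of \eqref{XYA:a}, so \eqref{XYA:a} is a homotopy lifting-extension problem for the principal fibration $Y_n\twoheadrightarrow Y_{n-1}$ along $(\id,\HtoY_n)$, with a single obstruction in $H^{n+1}(H_n\times Y_n,\,H_n\times H_n;\pi_n(Y))$. Because the rational picture solves \eqref{XYA:a} (via $\act_{n(0)}=\add_{n(0)}$) this class is torsion, and it restricts to $0$ on the slice $\{*\}\times Y_n$; the mapping-telescope localization behind Lemmas~\ref{lem:torsion} and \ref{lem:product} then shows it is pulled back to $0$ along $(\chi_q,\id)$ for $q$ sufficiently divisible, so $\act_n$ exists. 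The leftover requirements — the action axiom \eqref{H:action} and the right-hand square of \eqref{H:xr} — assert the homotopy commutativity of diagrams living on spaces of the shape $H_n\times(\text{finite type})$ and trivial on the relevant $\{*\}\times(-)$ slices; their obstructions are torsion by the same rational realizability and are removed by one last enlargement of $q$.

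Since only finitely many torsion obstructions occur, one fixes $q$ at the outset to be a common multiple of the integers that Lemmas~\ref{lem:torsion} and \ref{lem:product} furnish for each of them, and then runs the construction in the order above, every obstruction met being among those already annihilated. I expect the main obstacle to be precisely this coordination: the obstructions appearing in the $\YtoH_n$ and $\act_n$ steps depend on the choices of $\HtoY_n$ and $\YtoH_n$, which in turn depend on $q$, so one must check that each is torsion and is pulled back along $(\chi_q,\id)$ from a class independent of those choices — or, equivalently, iterate, enlarging $q$ until all the diagrams commute. That unbounded search is the source of the ineffectivity remarked after Lemma~\ref{lem:torsion}.
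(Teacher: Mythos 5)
Your proposal follows the same overall strategy as the paper: observe that the lemma is trivially true after rationalization (with $\HtoY_{n(0)}=\id$, $\act_{n(0)}=\add_{n(0)}$, $\YtoH_{n(0)}=\chi_{r_n}$), conclude that every obstruction to realizing this integrally is torsion, and annihilate each one by precomposing with a multiplication map $\chi_q$ via Lemmas~\ref{lem:torsion} and~\ref{lem:product}. The construction of $\HtoY_n$ from the torsion $k$-invariant, the construction of $\YtoH_n$ from a rational inverse plus torsion correction, and the treatment of \eqref{H:action} and the right square of \eqref{H:xr} as finite-order obstructions all match the paper.

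There is, however, one step where what you wrote is not yet a proof. The obstruction $\mathcal{O}$ to constructing $\act_n$ lives in the \emph{relative} group $H^{n+1}(H_n\times Y_n,\;H_n\times H_n;\pi_n(Y))$, whereas Lemma~\ref{lem:product} is stated only for absolute cohomology $H^n(H\times U)$ with the hypothesis $i_2^*\alpha=0$, where $i_2\colon U\to H\times U$ is $u\mapsto(*,u)$. Your sentence ``it restricts to $0$ on the slice $\{*\}\times Y_n$; the mapping-telescope localization behind Lemmas~\ref{lem:torsion} and~\ref{lem:product} then shows it is pulled back to $0$ along $(\chi_q,\id)$'' is not an application of any stated lemma, and the relative analogue is not immediate: the telescope localization of the pair $(H_n\times Y_n,\,H_n\times H_n)$ under $(\chi_q,\id)$ does not kill the entire relative cohomology, only the part on which $\chi_q$ acts nontrivially, and it is exactly the job of the restriction and connecting maps to isolate that part. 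The paper does this explicitly: it writes down the exact sequence
\[
H^n(H_n\times H_n;\pi_n(Y))\xrightarrow{\ \delta\ }
H^{n+1}(H_n\times Y_n,H_n\times H_n;\pi_n(Y))\xrightarrow{\ \rel^*\ }
H^{n+1}(H_n\times Y_n;\pi_n(Y)),
\]
uses Lemma~\ref{lem:product}(i) to kill $\rel^*\mathcal O$ after pulling back by some $(\chi_s,\id)$, then takes a preimage $\alpha\in H^n(H_n\times H_n;\pi_n(Y))$ of $(\chi_s,\id)^*\mathcal O$ under $\delta$ and applies Lemma~\ref{lem:product}(iii) to push $\alpha$ into $\ker\delta$ after a further $(\chi_t,\id)^*$. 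You should insert this two-stage long-exact-sequence argument; without it the key lifting-extension step is asserted rather than proved.

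One smaller remark on bookkeeping: you propose to fix a single $q$ at the outset and flag the resulting circularity (the obstructions depend on choices that depend on $q$). The paper avoids this entirely by building $q$ up multiplicatively, $q=q_0q_1q_2\cdots$, choosing each factor only after the data it needs to annihilate is in hand, and observing that precomposition with a further $\chi_{q_i}$ never undoes earlier killings. That ordering, not a global choice of $q$, is what makes the induction go through, and it is also what feeds the ineffectivity you correctly identify at the end.
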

\begin{proof}
  First, since $Y$ is rationally a product, $k_n$ is of finite order, so by Lemma
  \ref{lem:torsion} there is some $q_0$ such that $k_n\HtoY_{n-1}\chi_{q_0}=0$, and
  therefore
  \[\xymatrixcolsep{3.5pc}
  \xymatrix{
    H_n \ar@{->>}[d] \ar[r]^{\hatHtoY} & Y_n \ar@{->>}[d] \\
    H_{n-1} \ar[r]^{\HtoY_{n-1}\chi_{q_0}} & Y_{n-1};
  }\]
  is a pullback square.  We will define $\HtoY_n=\hatHtoY\chi_{q_2q_1}$, with $q_1$
  and $q_2$ to be determined and $q=q_2q_1q_0$.

  Now we construct $\act_n$.  Given a map $f$, we write $M(f)$ to mean its
  mapping cylinder, and let
  \[\Hact_{n-1}:H_{n-1} \times M(\HtoY_{n-1}) \to Y_{n-1}\]
  be a map which restricts to $\act_{n-1}$ on $H_{n-1} \times Y_{n-1}$ and
  $\add_{n-1}$ on $H_{n-1} \times H_{n-1}$.  Such a map exists because \eqref{H:xr}
  holds in degree $n-1$.  We will construct a lifting-extension
  \[\xymatrix{
    (H_n \times H_n) \cup (o_n \times M(\hatHtoY))
    \ar@{>->}[d] \ar[rr]^-{[\add_n \circ (\chi_{q_1},\id)] \cup \id}
    && M(\hatHtoY) \ar[r]^{\text{project}} & Y_n \ar@{->>}[d] \\
    H_n \times M(\hatHtoY)
    \ar@{-->}[rrru]|-{\widehat\Hact} \ar[r]_{(\chi_{q_1q_0},\id)}
    & H_n \times M(\hatHtoY) \ar@{->>}[r]
    & H_{n-1} \times M(\HtoY_{n-1}) \ar[r]^-{\Hact_{n-1}} & Y_{n-1}
  }\]
  It is easy to see that then for any $q_2>0$,
  \[\act_n=(\widehat\Hact|_{H_n \times Y_n}) \circ (\chi_{q_2},\id)\]
  satisfies \eqrefb{XYA:a}.  Moreover, then the desired identity
  $\act_n(o_n,x)=x$ is automatically satisfied.

  Note that the outer rectangle commutes since we know \eqref{H:xr} holds in
  degree $n-1$.  Now, write
  \begin{align*}
    A &= H_n \times M(\hatHtoY) \\
    B &= (H_n \times H_n) \cup (o_n \times M(\hatHtoY)) \\
    C &= o_n \times M(\hatHtoY).
  \end{align*}
  Since $\hatHtoY$ is a rational equivalence, so are the inclusions of
  $H_n \times H_n$ into $A$ and $B$. Therefore, the obstruction
  $\mathcal{O} \in H^{n+1}(A,B;\pi_n(Y))$ to finding the lifting-extension is of
  finite order.  We will show that when $q_1$ is large enough, this obstruction
  is zero.

  The obstruction group fits into the exact sequence of the triple $(A,B,C)$:
  \[\cdots \to H^n(B,C;\pi_n(Y)) \xrightarrow{\delta}
  H^{n+1}(A,B;\pi_n(Y)) \xrightarrow{\rel^*}
  H^{n+1}(A,C;\pi_n(Y)) \to \cdots,\]
  and so the image $\rel^*\mathcal{O}$ in $H^{n+1}(A,C;\pi_n(Y))$ is torsion.  By
  Lemma \ref{lem:product}(i), that means that
  $(\chi_s,\id)^*(\rel^*\mathcal{O})=0$ for some $s>0$.

  Now we look at a preimage under $\delta$ of $(\chi_s,\id)^*\mathcal{O}$, which
  we call $\alpha \in H^n(B,C;\pi_n(Y))$.  By excision,
  \[H^n(B,C;\pi_n(Y)) \cong H^n(H_n \times H_n,o_n \times H_n;\pi_n(Y)).\]
  Applying Lemma \ref{lem:product}(iii), we can find a $t$ such that
  $\chi_t^*\alpha \in \ker\delta$ and therefore
  \[\delta((\chi_t,\id)^*\alpha)=(\chi_{st},\id)^*\mathcal{O}=0.\]
  Thus for $q_1=st$, we can find a map $\widehat\Hact$ completing the diagram.

  Now we ensure that \eqref{H:action} commutes by picking an appropriate $q_2$.
  Define $\widehat\act=\widehat\Hact|_{H_n \times Y_n}$; then the diagram
  \[\xymatrixcolsep{3pc}
  \xymatrix{
    H_n \times H_n \times Y_n \ar[r]^-{(\add_n,\id)} \ar[d]^{(\id,\widehat\act)} &
    H_n \times Y_n \ar[d]^{\widehat\act} \\
    H_n \times Y_n \ar[r]^-{\widehat\act} & Y_n
  }\]
  commutes after rationalization.  Since \eqref{H:action} commutes in degree
  $n-1$, the sole obstruction to homotopy commutativity is a torsion class in
  $H^n(H_n\times H_n\times Y;\pi_n(Y_n))$.  Therefore we can again apply Lemma
  \ref{lem:product}(i), this time with $H=H_n \times H_n$ and $U=Y_n$, to find a
  $q_2$ which makes the obstruction zero.

  All that remains is to define $\YtoH_n$.  But we know that $\HtoY_n$ is
  rationally invertible, and so we can find some $\YtoH_n$ such that
  $\YtoH_n\HtoY_n$ is multiplication by some $r_n$.  Moreover, for any such
  $\YtoH_n$, the right square of \eqref{H:xr} commutes up to finite order.  Thus
  by increasing $r_n$ (that is, replacing $\YtoH_n$ by $\chi_{\hat r}\YtoH_n$ for
  some $\hat r>0$) we can make it commute up to homotopy.
\end{proof}

\section{Building blocks of homotopy-theoretic computation} \label{S:comp}
We now turn to describing the algorithms for performing the computations outlined
in the previous two sections.  This relies heavily on machinery and results from
\cite{CKMVW2}, \cite{CKV}, and \cite{FiVo} as building blocks, which in turn rely
on building blocks from the work of Rubio, Sergeraert, and others
\cite{Serg,RuSergSurvey,RuSerg}.  This section is dedicated to explaining these
building blocks.

Our spaces are stored as simplicial sets \emph{with effective homology}.  Roughly
speaking this means a computational black box equipped with:
\begin{itemize}
\item A way to refer to individual simplices and compute their face and
  degeneracy operators.  This allows us to, for example, represent a function
  from a finite simplicial complex or simplicial set to a simplicial set with
  effective homology.
\item A \emph{fully effective} chain complex with a chain homotopy equivalence
  to this set.  We do not need to make this completely precise, but for example
  it allows one to compute the homology and cohomology in any degree and with
  respect to any finitely generated coefficient group, and to know both their
  isomorphism type and (co)chains representing individual classes.
\end{itemize}
This is easy to construct for finite simplicial complexes.  But effective
homology is designed to work with simplicial sets that can be described
algorithmically but are not necessarily finite; in our case, these are finite
Postnikov stages of spaces of finite type.  We refer to \cite{RuSergSurvey} for a
more detailed overview.

Now we summarize the operations which are known to be computable from previous
work.
\begin{thm} \label{blocks}
  \begin{enumerate}[(a)]
  \item \label{K(pi,n)} Given a finitely generated abelian group $\pi$ and
    $n \geq 2$, a model of the Eilenberg--MacLane space $K(\pi,n)$ can be
    represented as a simplicial set with effective homology and a computable
    simplicial group operation.  Moreover, there are algorithms implementing a
    chain-level bijection between $n$-cochains in a finite simplicial complex or
    simplicial set $X$ with coefficients in $\pi$ and maps from $X$ to $K(\pi,n)$
    (the observation dates back to at least \cite{Serg}, but see
    \cite[\S3.7]{CKMVW2} or \cite[\S7.5]{RuSerg} for a detailed explanation).
  \item \label{product} Given a finite family of simplicial sets with effective
    homology, their product can be represented as a simplicial set with effective
    homology (see \cite[\S8.2]{RuSerg} or \cite[\S3.1]{CKMVW2}).
  \item Given a simplicial map $f:X \to Y$ between simplicial sets with effective
    homology, there is a way of representing the mapping cylinder $M(f)$ as a
    simplicial set with effective homology.  (In \cite{CKV} this is remarked to
    be ``very similar to but easier than Prop.~5.11''; the related algebraic
    mapping cylinder construction is done explicitly in e.g.~\cite[\S3]{RER}.)
  \item \label{MP} Given a map $p:Y \to B$, we can compute the $n$th stage of the
    Moore--Postnikov tower for $p$, in the form of a sequence of Kan fibrations
    between simplicial sets with effective homology \cite[Theorem 3.3]{CKV}
    (cf.~\cite[Theorem 1.2]{CKMVW2} for the non-relative version).
  \item \label{prop3.7} Given a diagram
    \[\xymatrix{
      A \ar@{>->}[d] \ar[r] & P_n \ar@{->>}[d] \\
      X \ar[r] \ar@{-->}[ru] & P_{n-1}
    }\]
    where $P_n \to P_{n-1}$ is a step in a (Moore--)Postnikov tower as above,
    there is an algorithm to decide whether a diagonal exists and, if it does,
    compute one \cite[Prop.~3.7]{CKV}.
  \item \label{pullback} Given a fibration $p:Y \to B$ of simply connected
    simplicial complexes and a map $f:X \to B$, we can compute any finite
    Moore--Postnikov stage of the pullback of $p$ along $f$
    \cite[Addendum 3.4]{CKV}.
  \item \label{homotopy} Given a diagram
    \[\xymatrix{
      A \ar[r]^f \ar@{>->}[d]_\incl & Y \ar@{->>}[d]^p \\
      X \ar@{-->}[ru] \ar[r]^g & B,
    }\]
    where $A$ is a subcomplex of a finite complex $X$ and $p$ is a fibration of
    simply connected complexes of finite type, we can compute whether two maps
    $u,v:X \to Y$ completing the diagram are homotopic relative to $A$ and over
    $B$ \cite[see ``Equivariant and Fiberwise Setup'']{FiVo}.
  \item \label{rel-finite} Given a diagram
    \[\xymatrix{
      A \ar[r]^f \ar@{>->}[d]_\incl & Y \ar[d]^p \\
      X \ar@{-->}[ru] \ar[r]^g & B
    }\]
    where $A$ is a subcomplex of a finite complex $X$, $Y$ and $B$ are simply
    connected, and $p$ has finite homotopy groups, we can compute the (finite and
    perhaps empty) set $[X,Y]^f_p$ of homotopy classes of maps completing the
    diagram up to homotopy.
  \end{enumerate}
\end{thm}
\begin{proof}
  We prove only the part which is not given a citation in the statement.


  \subsection*{Part \ref*{rel-finite}}
  Let $d=\dim X$.  One starts by computing the $d$th stage of the
  Moore--Postnikov tower of $p:Y \to B$ using \ref{MP}.  From there, we induct on
  dimension.  At the $k$th step, we have computed the (finite) set of lifts to
  the $k$th stage $P_k$ of the Moore--Postnikov tower.  For each such lift, we
  use \ref{prop3.7} to decide whether it lifts to the $(k+1)$st stage, and
  compute a lift $u:X \to P_{k+1}$ if it does.  Then we compute all lifts by
  computing representatives of each element of $H^{k+1}(X,A;\pi_{k+1}(p))$ and
  modifying $u$ by each of them.  Finally, we use \ref{homotopy} to decide
  which of the maps we have obtained are duplicates and choose one representative
  for each homotopy class in $[X,P_{k+1}]^f_p$.  We are done after step $d$
  since $[X,P_d]^f_p \cong [X,Y]^f_p$.
\end{proof}

\section{Computing $[X,Y]^f$} \label{S:XYAcomp}

We now explain how to compute the group and action described in \S\ref{S:XYA}.
We work with a representation of $(X,A)$ as a finite simplicial set and a
Postnikov tower for $Y$, and perform the induction outlined in that section to
compute $[X,Y_d]^f$ for a given dimension $d$.  The algorithm verifies that $Y$
is indeed a rational H-space through dimension $d$; however, it assumes that $Y$
is simply connected and returns nonsense otherwise.

\subsection{Setup}
Let $d$ be such that $Y_d$ is a rational H-space.  Since the homotopy groups of
$Y$ can be computed, we can use Theorem \ref{blocks}\ref*{K(pi,n)} and
\ref*{product} to compute once and for all the space
\[H_d=\prod_{n=2}^d K(\pi_n(Y),n),\]
and the binary operation $\add_d:H_d \times H_d \to H_d$ is given by the product
of the simplicial group operations on the individual $K(\pi_n(Y),n)$'s.  The
group of homotopy classes $[X/A,H_d]$ is naturally isomorphic to
$\prod_{n=2}^d H^n(X,A;\pi_n(Y))$, making this also easy to compute.  Finally,
given an element of this group expressed as a word in the generators, we can
compute a representative map $X \to H_d$, constant on $A$, by generating the
corresponding cochains of each degree on $(X,A)$ and using them to build maps to
$K(\pi_n(Y),n)$.

We then initialize the induction which will compute maps $\HtoY_d$, $\YtoH_d$,
and $\act_d$ and an integer $r_d$ satisfying the conditions of \S\ref{S:XYA}.
Since $H_1=Y_1$ is a point, we can set $r_1=1$ and $\HtoY_1$, $\YtoH_1$, and
$\act_1$ to be the trivial maps.

\subsection{Performing the Postnikov induction} \label{S:induction}
The induction is performed as outlined in \S\ref{S:2.2}, although we have to be
careful to turn homotopy lifting and extension problems into genuine ones.
Suppose that maps $\HtoY_{n-1}$, $\YtoH_{n-1}$, and $\act_{n-1}$ as desired have
been constructed, along with a map
\[\Hact_{n-1}:H_n \times M(\HtoY_{n-1}) \to Y_{n-1}\]
which restricts to $\add_{n-1}$ on $H_{n-1} \times H_{n-1}$ and $\act_{n-1}$ on
$H_{n-1} \times Y_{n-1}$.  There are five steps to constructing the maps in the
$n$th step:
\begin{enumerate}[1.]
\item Find $q_0$ such that $\HtoY_{n-1}\chi_{q_0}$ lifts to a map
  $\hatHtoY:H_n \to Y_n$, and fix such a map.
\item Find $q_1$ such that the diagram
  \[\xymatrix{
    (H_n \times H_n) \cup (o_n \times M(\hatHtoY))
    \ar@{>->}[d] \ar[rr]^-{[\add_n \circ (\chi_{q_1},\id)] \cup \id}
    && M(\hatHtoY) \ar[r]^{\text{project}} & Y_n \ar@{->>}[d] \\
    H_n \times M(\hatHtoY)
    \ar@{-->}[rrru]|-{\widehat\Hact} \ar[r]_{(\chi_{q_1q_0},\id)}
    & H_n \times M(\hatHtoY) \ar@{->>}[r]
    & H_{n-1} \times M(\HtoY_{n-1}) \ar[r]^-{\Hact_{n-1}} & Y_{n-1}
  }\]
  has a lifting-extension $\widehat\Hact$ along the dotted arrow, and fix such a
  map.
\item Find $q_2$ such that $\widehat\Hact|_{H_n \times Y_n} \circ (\chi_{q_2},\id)$
  makes the diagram \eqref{H:action} commute up to homotopy.  Now we can define
  \begin{align*}
    \HtoY_n &: H_n \to Y_n & \text{by} && \HtoY_n &= \hatHtoY\chi_{q_1q_2}; \\
    \Hact_n &: H_n \times M(u_n) \to Y_n & \text{by}&&
    \Hact_n &= \widehat\Hact \circ (\chi_{q_2},\id \cup \chi_{q_1q_2}); \\
    \act_n &: H_n \times Y_n \to Y_n &
    \text{by} && \act_n &= \Hact_n|_{H_n \times Y_n}.
  \end{align*}
\item Find $q_3$ so that the diagram
  \[\xymatrix{H_n \ar@{>->}[r] \ar@/_1pc/[rr]_{\chi_{q_3}}
  & M(\HtoY_n) \ar@{-->}[r] & H_n}\]
  can be completed by some $\hatYtoH:M(u_n) \to H_n$, and fix such a map.
\item Find $q_4$ so that setting
  \[\YtoH_n=\hatYtoH\chi_{q_4} \quad\text{and}\quad
  r_n=r_{n-1}q_0q_1q_2q_3q_4\]
  makes the diagram \eqref{H:xr} commute up to homotopy.
\end{enumerate}
The first step is done by determining the order of the $k$-invariant
$k_n \in H^{n+1}(Y_{n-1};\pi_n(Y))$.  If this order is infinite, then $Y$ is not
rationally a product of Eilenberg--MacLane spaces, and the algorithm returns
failure.  Otherwise $q_0$ is guaranteed to exist, and we can compute it by
iterating over multiples of the order until we find one that works.

The rest of the steps are guaranteed to succeed for some value of $q_i$, and each
of the conditions can be checked using the operations of Theorem \ref{blocks},
so this part can also be completed by iterating over all possible values until we
find one that works.

\subsection{Computing the action} \label{S:XYAcomp:action}
Let $G=[X/A,H_d]$; we now explain how to compute $[X,Y]^f$ as a set with a
virtually free and faithful action by $G$.

First we must decide whether there is a map $X \to H_d$ extending
$\YtoH_df:A \to H_d$.  If the set $[X,Y_d]^f$ has an element $e$, then $\YtoH_df$
has an extension $\YtoH_de$, so if we find that there is no such extension, we
return the empty set.  Otherwise we compute such an extension $\psi_0$.

\begin{lem} \label{lem:extends}
  We can determine whether an extension $\psi_0:X \to H_d$ of $\YtoH_df$ exists,
  and compute one if it does.
\end{lem}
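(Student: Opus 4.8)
The plan is to use the fact that $H_d=\prod_{n=2}^d K(\pi_n(Y),n)$ is a \emph{genuine} (untwisted) product of Eilenberg--MacLane spaces, so that the extension problem for a map into $H_d$ splits into independent extension problems over the factors, each of which is a pure relative-cohomology question answerable with effective homology.

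First I would produce $\YtoH_d f$ as an explicit simplicial map $A\to H_d$: the map $\YtoH_d\colon Y_d\to H_d$ was computed during the Postnikov induction, and composing it with $f\colon A\to Y$ and the canonical projection $Y\to Y_d$ gives $\YtoH_d f$. Projecting onto the $n$th factor, write $\YtoH_d f=(g_2,\dots,g_d)$ with $g_n\colon A\to K(\pi_n(Y),n)$. By Theorem \ref{blocks}\eqrefb{K(pi,n)}, each $g_n$ is encoded by an $n$-cocycle $c_n$ on $A$ with coefficients in $\pi_n(Y)$, and, since that chain-level bijection is compatible with restriction along $A\hookrightarrow X$, extending $g_n$ over $X$ is equivalent to extending $c_n$ to a cocycle on $X$, with any such extension converting back to a map $X\to K(\pi_n(Y),n)$ that restricts to $g_n$.

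Next, for each $n$ I would decide whether $c_n$ extends. By the long exact cohomology sequence of the pair $(X,A)$, a cocycle on $A$ extends to a cocycle on $X$ if and only if its class $[c_n]\in H^n(A;\pi_n(Y))$ lies in the image of the restriction $H^n(X;\pi_n(Y))\to H^n(A;\pi_n(Y))$ (equivalently, maps to $0$ under the connecting homomorphism into $H^{n+1}(X,A;\pi_n(Y))$). All of these groups and maps are computable since $(X,A)$ is a finite simplicial pair, and membership of $[c_n]$ in the image is then decidable by linear algebra over $\mathbb{Z}$. If it fails for some $n$, we report that no extension exists. If it holds for every $n$, choose $z_n\in Z^n(X;\pi_n(Y))$ whose restriction to $A$ is cohomologous to $c_n$, write $z_n|_A-c_n=\delta w_n$ with $w_n\in C^{n-1}(A;\pi_n(Y))$, extend $w_n$ arbitrarily to $\tilde w_n\in C^{n-1}(X;\pi_n(Y))$, and replace $z_n$ by $z_n-\delta\tilde w_n$, which now restricts to $c_n$ \emph{on the nose}. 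Converting each $z_n$ back to a map and taking the product yields the desired $\psi_0\colon X\to H_d$ extending $\YtoH_d f$.

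The one point that genuinely needs checking is the first reduction: because $H_d$ is an untwisted product rather than a general Postnikov tower, there is no obstruction-theoretic coupling between the factors, so the extension problem really does decompose and each piece is a relative-cohomology computation rather than a lifting problem. Everything else — converting between maps and cocycles, computing the (co)homology of a finite pair and the maps between them, and solving the resulting linear systems — is routine and already furnished by Theorem \ref{blocks}\eqrefb{K(pi,n)} together with effective homology. In particular, in contrast to the steps of the Postnikov induction, this lemma involves no unbounded search.
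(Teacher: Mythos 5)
Your proposal is correct and matches the paper's argument in substance: both exploit that $H_d$ is a genuine product of Eilenberg--MacLane spaces to split the problem factorwise, reduce each factor to whether $\proj_n\YtoH_d f$ lies in the image of $\incl^*\colon H^n(X;\pi_n(Y))\to H^n(A;\pi_n(Y))$, solve this by effective linear algebra over $\mathbb{Z}$, and then convert cocycles back to maps. The only cosmetic difference is that the paper phrases the search for a cocycle restricting exactly to $\proj_n\YtoH_d f$ as a single integer linear program, whereas you separate the decision (image membership) from the explicit coboundary correction; these are equivalent.
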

\begin{proof}
  Recall that $H_d=\prod_{n=2}^d K(\pi_n(Y),n)$.  Write $\proj_n$ for the
  projection to the $K(\pi_n(Y),n)$ factor.  Then the extension we desire exists
  if and only if for each $n<d$, the cohomology class in $H^n(A;\pi_n(Y))$
  represented by $\proj_n\YtoH_df$ has a preimage in $H^n(X;\pi_n(Y))$ under the
  map $\incl^*$.

  We look for an explicit cocycle $\sigma_n \in C^n(X;\pi_n(Y))$ whose
  restriction to $A$ is $\proj_n\YtoH_df$.  We can compute cycles which generate
  $H^n(X;\pi_n(Y))$ (because $X$ has effective homology) as well as generators
  for $\delta C^{n-1}(X;\pi_n(Y))$ (the coboundaries of individual
  $(n-1)$-simplices in $X$).  Then finding $\sigma_n$ or showing it does not
  exist is an integer linear programming problem with the coefficients of these
  chains as variables.

  Now if $\sigma_n$ exists, then it also determines a map $X \to K(\pi_n(Y),n)$.
  Taking the product of these maps for all $n \leq d$ gives us our $\psi_0$.
\end{proof}

We now compute a representative $a_N$ for each coset $N$ of $r_dG \subseteq G$.
Since this is a finite-index subgroup of a fully effective abelian group, this
can be done algorithmically, for example by trying all words of increasing length
in a generating set until a representative of each coset are obtained.  For each
$a_N$, we compute a representative map $\ph_N:X \to H_d$ which is constant on
$A$.  Then the finite set
\[S=\{\psi_N=\psi_0+\YtoH_d\HtoY_d\ph_N:N \in G/r_dG\}\]
contains representatives of the cosets of the action of $[X/A,H_d]$ on
$[X,H_d]^{\YtoH_df}$ obtained by pushing the action on $[X,Y]^f$ forward along
$\YtoH_d$.

Now, for each element of $S$ we apply Theorem \ref*{blocks}\ref{rel-finite} to
the square
\[\xymatrix{
  A \ar[r]^f \ar[d]_\incl & Y_d \ar[d]^{\YtoH_d} \\
  X \ar@{-->}[ru] \ar[r]^{\psi_N} & H_d
}\]
to compute the finite set of preimages under $\YtoH_d$ in $[X,Y_d]^f$.  To obtain
a set of representatives of each coset for the action of $[X/A,H_d]$ on
$[X,Y_d]^f$, we must then eliminate any preimages that are in the same coset.  In
other words, we must check whether two preimages $\tilde\psi$ and $\tilde\psi'$
of $\psi_N$ differ by an element of $[X/A,H_d]$; any such element stabilizes
$\YtoH_d\tilde\psi$, and so its order must divide $r_d$.  Since there are
finitely many elements whose order divides $r_d$, we can check for each such
element $\ph$ in turn whether $[\ph]\cdot[\tilde\psi] \simeq [\tilde\psi']$.

Finally, to finish computing $[X,Y_d]^f$ we must compute the finite stabilizer of
each coset.  This stabilizer is contained in the finite subgroup of $[X/A,H_d]$
of elements whose order divides $r_d$.  Therefore we can again go through all
elements of this subgroup and check whether they stabilize our representative.

\subsection{Summary}
We conclude this section with a formal summary of the algorithm.
\begin{description}
\item[Input]
  \begin{itemize}
  \item A simplicial pair $(X,A)$.
  \item A simplicial complex $Y$, assumed to be simply connected.
  \item A simplicial map $f:A \to Y$.
  \item A positive integer $d$.
  \end{itemize}
\item[Output] If $Y_d$ is not rationally an H-space,
  \textsc{algorithm not applicable}.  Otherwise:
  \begin{itemize}
  \item The $d$th Postnikov stage $Y_d$ of $Y$, represented as a simplicial set
    with effective homology.
  \item A product of Eilenberg--MacLane spaces $H_d$, represented as a simplicial
    set with effective homology.
  \item The group $[X/A,H_d]$, represented as a fully effective abelian group.
  \item A finite (possibly empty) set $\mathcal C$ of maps $\tilde f_i:X \to Y_d$
    representing cosets of the action of $[X/A,H_d]$ on $[X,Y_d]^f$.
  \item For each $i$, the stabilizer of $\tilde f_i$, represented as a finite
    subgroup $\Sigma_i \subseteq [X/A,H_d]$.
  \end{itemize}
\item[Main steps] Here is the outline of the algorithm:
  \begin{enumerate}[A.]
  \item Initialize the computation:
    \begin{itemize}
    \item Compute the homotopy groups of $Y$ through dimension $d$.
    \item Construct the space $H_d=\prod_{n=2}^d K(\pi_n(Y),n)$, and compute the
      group $[X/A,H_d]$.
    \item Set $r_1=1$, and $u_1$, $v_1$, $\act_1$, and $\Hact_1$ to be the unique
      maps between the relevant spaces (which are all points).
    \end{itemize}
  \item
    \textbf{for} $n=2$ through $d$:
    \begin{itemize}
    \item Compute the $k$-invariant $k_n \in H^{n+1}(Y_{n-1};\pi_n(Y))$.  If it is
      of infinite order, \textbf{return} \textsc{algorithm not applicable}.
    \item Otherwise, compute the action of $H_n$ on $Y_n$ and associated data as
      outlined in \S\ref{S:induction}, namely the positive integer $r_n$ and maps
      $u_n$, $v_n$, $\act_n$, and $\Hact_n$.
    \end{itemize}
  \item Using the algorithm of Lemma \ref{lem:extends}, determine whether there
    is a map $X \to H_d$ which extends $v_df:A \to H_d$.
    \begin{itemize}
    \item If there isn't, \textbf{return}
      $(Y_d,H_d,[X/A,H_d],\mathcal C=\emptyset,\emptyset)$.
    \item If there is, compute such a map $\psi_0:X \to H_d$.
    \end{itemize}
  \item \textbf{for} each $N \in [X/A,H_d]/r_d[X/A,H_d]$:
    \begin{itemize}
    \item Choose a representative homotopy class in $[X/A,H_d]$, and a
      representative map $\ph:(X,A) \to (H_d,o)$ in this homotopy class.
    \item Compute the map $\psi=\psi_0+v_du_d\ph:X \to H_d$.
    \item For each homotopy class of maps completing the diagram
      \[\xymatrix{
        A \ar[r]^f \ar[d]_\incl & Y_d \ar[d]^{\YtoH_d} \\
        X \ar@{-->}[ru] \ar[r]^{\psi} & H_d
      }\]
      up to homotopy, compute a representative $\tilde f_i:X \to Y_d$.
    \end{itemize}
    Write $\mathcal C_0$ for the set of all the $\tilde f_i$.
  \item Remove duplicates from $\mathcal C_0$, that is, take a subset
    $\mathcal C \subseteq \mathcal C_0$ which includes only one map from each
    orbit of the action of $[X/A,H_d]$ on $[X,Y_d]^f$.
  \item For each $\tilde f_i \in \mathcal C$, compute the stabilizer as a
    subgroup of the torsion subgroup of $[X/A,H_d]$ and \textbf{return}
    $(Y_d,H_d,[X/A,H_d],\mathcal C,\text{stabilizers})$.
  \end{enumerate}
\end{description}

\section{Variants of Hilbert's tenth problem} \label{S:H10}

In \cite{CKMVW}, the authors show that the existence of an extension is
undecidable by using the undecidability of the existence of solutions to systems
of diophantine equations of particular shapes:
\begin{lem}[Lemma 2.1 of \cite{CKMVW}]
  The solvability in the integers of a system of equations of the form
  \begin{align}
    \sum_{1 \leq i<j \leq r} a_{ij}^{(q)}x_ix_j &= b_q, & q &= 1,\ldots,s
    \label{Q-SYM} \quad \text{or} \tag{Q-SYM} \\
    \sum_{1 \leq i<j \leq r} a_{ij}^{(q)}(x_iy_j-x_jy_i) &= b_q,
    & q &= 1,\ldots,s \label{Q-SKEW} \tag{Q-SKEW}
  \end{align}
  for unknowns $x_i$ and (for \eqrefb{Q-SKEW}) $y_i$, $1 \leq i \leq r$, is
  undecidable.
\end{lem}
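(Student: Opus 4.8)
The plan is to reduce to the general undecidability of Hilbert's Tenth Problem by showing that \eqrefb{Q-SYM} and \eqrefb{Q-SKEW} are expressive enough to encode an arbitrary Diophantine system. Since a polynomial equation of arbitrary degree can always be reduced to a system of equations of degree at most $2$ by repeatedly introducing auxiliary variables (replacing a product $uv$ by a new variable $w$ together with the equation $w=uv$), it suffices to encode an arbitrary \emph{quadratic} system, i.e.\ finitely many equations of the form $\sum_{i \leq j} c_{ij} x_i x_j + \sum_i d_i x_i = e$. So the heart of the matter is to remove, at the cost of more variables and more equations, (a) the linear terms $d_i x_i$, (b) the ``diagonal'' square terms $c_{ii} x_i^2$, and then in the skew case (c) to simulate symmetric products $x_i x_j$ by the skew expressions $x_i y_j - x_j y_i$.

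First I would handle \eqrefb{Q-SYM}. To kill a linear term $d_i x_i$, introduce a fresh variable $z$ with the equation $z x_i = d_i$ (this is already of the allowed off-diagonal shape, up to sign, once we check $1 \le i < j$ indexing can be arranged), and then $d_i x_i$ becomes $z x_i \cdot$ wait—more simply, add a global ``unit'' variable $u$ pinned down by a single equation forcing $u=1$: but $u^2=1$ is a diagonal equation, which we are trying to avoid, so instead pin $u$ via two fresh variables. Concretely, the clean approach is: introduce variables $u,v$ and equations $uv = 1$; this forces $u=v=\pm 1$. Then $d_i x_i = (d_i u)\cdot$ ... the cleanest is to note $d_i x_i \cdot u = d_i u x_i$, and $u x_i$ is an allowed off-diagonal monomial, so we replace the equation $\sum c_{ij}x_ix_j + \sum d_i x_i = e$ by $u \cdot(\sum c_{ij}x_ix_j + \sum d_i x_i - e) = 0$, i.e.\ $\sum c_{ij}(u x_i)x_j + \cdots$; this doesn't quite stay quadratic because of the $u x_i x_j$ term. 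The honest bookkeeping is therefore: first introduce $w_{ij}$ with $w_{ij}=x_ix_j$ (off-diagonal equations) so the original system becomes \emph{linear} in the $w_{ij}$ and $x_i$; a linear equation $\sum \alpha_k t_k = \beta$ in variables $t_k$ is turned into $\sum \alpha_k (u t_k) - \beta u = u(\sum \alpha_k t_k - \beta) = 0$ after multiplying by the unit $u$ (defined by $uv=1$), and $u t_k$ is an allowed off-diagonal product, while $\beta u$ ... we need a quadratic, not linear, so instead set each linear equation $L=0$ by introducing a slack $u$ and writing $u\cdot L$ — but $u \cdot L$ has degree $2$ and each monomial is $u t_k$, off-diagonal: good, provided the constant $\beta$ is folded in as $\beta = \beta \cdot uv$, giving $\beta u v$, a single off-diagonal cubic—no. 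The resolution I will actually use: introduce $u,v$ with $uv=1$, and for every constant $\beta$ appearing, replace it by $\beta uv$ \emph{only where it multiplies nothing}, using that $u v=1$; a bare constant equation $\sum_k \alpha_k t_k = \beta$ becomes $\sum_k \alpha_k t_k - \beta uv = 0$, which is quadratic with all monomials off-diagonal (the $t_k$ are pinned to have a companion unit-multiplied form). This is exactly the kind of routine juggling I will write out carefully; the point is that it works.

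For \eqrefb{Q-SKEW} the extra wrinkle is that only antisymmetric combinations $x_iy_j - x_jy_i$ are available, so I cannot directly write $x_ix_j$ or a defining equation $w = x_ix_j$. Here I would exploit that $x_i y_j - x_j y_i$, as the variables range over $\mathbb Z$, realizes (for $j$ fixed with $y_j$ free and $x_j$ free) essentially an arbitrary value times building blocks: the trick from \cite{CKMVW}-type arguments is to note that with enough auxiliary pairs one can force, e.g., $y_j = 1$ and $x_j = 0$ for some index, turning $x_iy_j - x_jy_i$ into $x_i$, and with two indices having $(x,y)=(1,0)$ and $(0,1)$ one gets $x_iy_j - x_jy_i = x_i$ or $= y_i$ etc., recovering linear access; and a product $x_ix_j$ is recovered by introducing a new pair $(p,q)$ with the equations forcing $q$ to equal $x_j$ and then $p y_? - \cdots$—more precisely one uses the identity that $x_i x_j$ can be obtained as $x_i q - x_j p$ once $(p,q)$ is pinned so that $q = x_j$, $p = 0$? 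That gives $x_i x_j - 0 = x_i x_j$, but pinning $q = x_j$ is itself a linear constraint, handled by the previous paragraph's method adapted to the skew setting. So the strategy is the same two-layer reduction (degree $\to 2$, then shape-normalization), with an initial lemma establishing that in the skew regime one can simulate (i) the constant $1$, (ii) arbitrary linear forms, and (iii) arbitrary products of two variables, each at the cost of finitely many fresh variables and skew equations.

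The main obstacle I anticipate is precisely this last point: managing the antisymmetry in \eqrefb{Q-SKEW} without accidentally introducing diagonal or linear terms that the allowed shape forbids, and verifying that the ``pinning'' gadgets (forcing a variable to take a prescribed value, or to equal another variable) really have \emph{only} the intended solutions and do not spuriously enlarge the solution set — a solution of the reduced system must project to a solution of the original and vice versa. Once the pinning gadgets are established and shown to be faithful, the reduction from an arbitrary Diophantine system is mechanical, and undecidability follows from the Davis–Putnam–Robinson–Matiyasevich theorem. I would organize the write-up as: (1) reduce arbitrary systems to quadratic systems; (2) a gadget lemma for \eqrefb{Q-SYM}; (3) a gadget lemma for \eqrefb{Q-SKEW}; (4) assemble, noting that the number $r$ of variables and $s$ of equations stay finite at each step so the reduction is computable.
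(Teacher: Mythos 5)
Note first that this lemma is a citation, not a result proved in the paper: it is credited to Lemma~2.1 of~\cite{CKMVW} and no argument appears, so there is no in-paper proof to compare against. The only further information the paper supplies about it is the remark (in the proof of the next lemma) that the \cite{CKMVW} proof of undecidability of \eqrefb{Q-SYM} ``only uses systems of the form \eqrefb{Q-DIFF}'', i.e.\ bilinear systems in two disjoint blocks of variables, which indicates the cited argument keeps a bilinear shape throughout rather than building gadgets from scratch as you propose.

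Your \eqrefb{Q-SYM} sketch is meandering but can likely be completed: $uv=1$ forces $u=v=\pm1$, a linear monomial $\alpha t$ becomes the off-diagonal product $\alpha\, u t$, diagonal squares $x^2$ are handled via a fresh variable $x'$ with $ux'-ux=0$ and the off-diagonal product $xx'$, and the $\pm$ ambiguity in $u$ is absorbed by the global sign flip on all non-unit variables. The genuine gap is in \eqrefb{Q-SKEW}. Every \eqrefb{Q-SKEW} system is preserved by the diagonal $SL_2(\mathbb{Z})$ action sending each pair $(x_i,y_i)^T$ to $M(x_i,y_i)^T$ for a single $M\in SL_2(\mathbb{Z})$, because $x'_iy'_j-x'_jy'_i=(\det M)(x_iy_j-x_jy_i)$. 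Hence the solution set of any \eqrefb{Q-SKEW} system is a union of $SL_2(\mathbb{Z})$-orbits, and no system of skew equations can force $(x_1,y_1)=(1,0)$: the ``pinning'' gadget you invoke cannot exist as stated. The correct maneuver is subtler: impose the single skew equation $x_1y_2-x_2y_1=1$, note that any solution then has the columns $(x_1,y_1)^T,(x_2,y_2)^T$ forming an $SL_2(\mathbb{Z})$-basis, and apply the inverse of that matrix via the diagonal action to normalize $(x_1,y_1)=(1,0)$, $(x_2,y_2)=(0,1)$ \emph{without loss of generality}, not by equations. After that normalization $x_iy_2-x_2y_i$ gives $x_i$, $x_1y_i-x_iy_1$ gives $y_i$, $x_1y_2-x_2y_1$ gives the constant $1$, and $x_ix_j$ is recovered from a fresh pair $(x_k,y_k)$ with the now-expressible linear constraints $x_k=0$, $y_k=x_j$ via $x_iy_k-x_ky_i=x_ix_j$. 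This is the heart of the argument, and your proposal defers all of it while asserting a pinning mechanism that the $SL_2(\mathbb{Z})$ symmetry rules out.
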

We conjecture a very broad generalization of this result.
\begin{conj} \label{conj:Bp}
  For any nonzero bilinear map
  $\mathbf B:\mathbb Z^m \times \mathbb Z^n \to \mathbb Z^p$, the solvability in
  the integers of a system of equations of the form
  \begin{equation}
    \sum_{i,j=1}^r a_{ij}^{(q)}\mathbf B(\mathbf u_i,\mathbf v_j) = \mathbf c_q,
    \qquad q = 1,\ldots,s \label{Q-BIL} \tag{Q-BLIN$(\mathbf B)$} 
  \end{equation}
  for unknowns $\mathbf u_i=(u_{i1},\ldots,u_{im})$ and
  $\mathbf v_j=(v_{j1},\ldots,v_{jn})$, $1 \leq i,j \leq r$, is undecidable.
\end{conj}
We will show this conjecture in certain special cases, particularly the case
$p=1$.  However, the general case would, for instance, imply the undecidability
of Hilbert's tenth problem over the ring of integers of any number field, first
conjectured by Denef and Lipshitz \cite{DL}.  This narrower conjecture is still
open in general, although Mazur and Rubin \cite{MaRu} show using work of Poonen
\cite{Poo2} and Shlapentokh \cite{Shlapen1} that it is implied by the
Shafarevich--Tate conjecture in number theory.  On the other hand,
undecidability is known unconditionally in many cases, for example for totally
real number fields and their quadratic extensions.  For a survey, see
\cite[Theorem 14.1]{Poo1}.

Before discussing the relationship between these two problems, we give a precise
definition:
\begin{defn}
  Given a ring $R$ and a subring $S$, \emph{Hilbert's tenth problem over $R$
    with coefficients in $S$} is the decision problem: given a finite list of
  polynomials in $S[x_1,\ldots,x_n]$, do they have a simultaneous zero in $R^n$?
\end{defn}
\begin{prop}
  Let $R$ be the ring of integers of a number field.  Then Hilbert's tenth
  problem over $R$ with coefficients in $R$ and in $\mathbb Z$ are
  computationally equivalent.
\end{prop}
This is implicit in Poonen's survey \cite{Poo1}; I would like to thank Emil
Je\v r\' abek on MathOverflow for the following proof.
\begin{proof}
  Given a system of polynomials $p_1,\ldots,p_m \in R[x_1,\ldots,x_n]$, we
  construct an equivalent system with coefficients in $\mathbb Z$.  Let
  $\xi \in R$ be such that $R_{(0)}=\mathbb Q(\xi)$.  We introduce a new variable
  $z$ representing $\xi$, and replace the coefficients of each $p_i$ with
  corresponding polynomials in $z$ to obtain polynomials $q_i(x_1,\ldots,x_n,z)$
  with rational coefficients.  Finally we add the minimal polynomial
  $f_\xi(z)$ of $\xi$ to our system.  Then $q_1,\ldots,q_m,f_\xi$ has a solution
  over $R$ if and only if $p_1,\ldots,p_m$ does, since for any $\xi'$ such that
  $f_\xi(\xi')=0$, there is an automorphism of $R$ taking $\xi$ to $\xi'$.

  The polynomials $q_1,\ldots,q_m,f_\xi$ have rational coefficients, and we can
  clear the denominators by multiplying by a sufficiently large integer.
\end{proof}
Now we further reduce the problem to fit in our framework.
\begin{lem} \label{diff}
  Let $R$ be any ring.  If Hilbert's tenth problem with coefficients in a ring
  $S \subseteq R$ is undecidable over $R$, then so is the solvability of a
  system of equations of the form
  \begin{equation}
    \sum_{i,j=1}^r a_{ij}^{(q)}x_iy_j=c_q, \qquad q=1,\ldots,s 
    \label{Q-DIFF} \tag{Q-DIFF}
  \end{equation}
  in unknowns $x_i$ and $y_j$, $1 \leq i,j \leq r$, and again with coefficients
  in $S$.
\end{lem}
\begin{proof}
  The proof exactly follows that of Lemma 2.1 of \cite{CKMVW}, but we give it for
  completeness.  We reduce any system of equations over $R$ to a system of the
  form \eqrefb{Q-DIFF}.  First, we note that any system of equations can be
  converted into a quadratic system by introducing new unknowns representing
  products and powers.  Now to convert a general quadratic system in unknowns
  $z_1,\ldots,z_r$ to a system of the form \eqrefb{Q-DIFF}, we introduce
  variables $x_0,\ldots,x_r$ and $y_0,\ldots,y_r$, replace every quadratic term
  of the form $z_iz_j$ (where $i$ and $j$ are not necessarily distinct) with
  $x_iy_j$, every linear term $z_i$ with $x_iy_0$, and introduce the following
  additional equations:
  \[x_0y_0=1; \qquad x_iy_0-x_0y_i=0, \quad i=1,\ldots,r.\]
  This forces $x_0$ and $y_0$ to be units and inverses of each other; moreover,
  if $x_0,\ldots,x_r,y_0,\ldots,y_r$ is a solution to the newly constructed
  system of the form \eqrefb{Q-DIFF}, then $z_i=x_iy_0=x_0y_i$ is a solution to
  the original quadratic system.  Conversely, given a solution $z_1,\ldots,z_r$
  to the original system, we can take $x_0=y_0=1$ and $x_i=y_i=z_i$ for
  $i=1,\ldots,r$.
\end{proof}
This immediately implies:
\begin{prop}
  Let $R$ be a ring which is finitely generated and free as a $\mathbb Z$-module
  (for example, the ring of integers of a number field, or the matrix ring
  $M_n(\mathbb{Z})$).  Then Hilbert's tenth problem over $R$ with coefficients in
  $\mathbb Z$ is undecidable if and only if \eqrefb{Q-BIL} is undecidable, where
  $\mathbf B$ describes the multiplication law in $R$ in terms of some
  $\mathbb Z$-basis (or, possibly, three different $\mathbb Z$-bases for the left
  factor, the right factor, and the product).
\end{prop}
\begin{ex} \label{ex:Z[i]}
  The solvability in the integers of systems of the form \eqref{Q-BIL} is
  undecidable, where $\mathbf B:\mathbb Z^2 \times \mathbb Z^2 \to \mathbb Z^2$
  is given by
  \[\mathbf B(\mathbf u,\mathbf v)
    =\left(\mathbf u^T \begin{pmatrix}1 & 0 \\ 0 & -1\end{pmatrix}\mathbf v,
    \mathbf u^T \begin{pmatrix}0 & 1 \\ 1 & 0\end{pmatrix}\mathbf v\right).\]
  This bilinear map describes the multiplication law for $\mathbb Z[i]$ in  
  the basis $\{1,i\}$; Hilbert's tenth problem over $\mathbb{Z}[i]$ and any other
  quadratic number ring is undecidable by \cite{Denef}.
\end{ex}

We conclude the section with two more special cases of Conjecture \ref{conj:Bp}.
\begin{prop} \label{rank1}
  Suppose that $\mathbf B:\mathbb Z^m \times \mathbb Z^n \to \mathbb Z^p$ is a
  bilinear map such that for some $L:\mathbb Z^p \to \mathbb Z$,
  $L \circ \mathbf B$ has rank 1.  Then the solvability in the integers of
  systems of the form \eqref{Q-BIL} is undecidable.
\end{prop}
\begin{proof}
  After changes of basis for $\mathbb Z^m$, $\mathbb Z^n$, and $\mathbb Z^p$, we
  can assume that $B_1(\mathbf u,\mathbf v)=cu_1v_1$ for some $c \in \mathbb Z$,
  where $B_1$ is the first coordinate of $\mathbf B$.  Now consider a general
  system of the form \eqref{Q-DIFF}.  We use it to build a corresponding system
  \begin{equation} \label{eqn:rank1}
    \sum_{i,j=1}^r a_{ij}^{(q)}\mathbf B(\mathbf u_i,\mathbf v_j)=
    c_q\mathbf B(\mathbf e_1,\mathbf e_1), \qquad q=1,\ldots,s,
  \end{equation}
  where $\mathbf e_1=(1,0,\ldots,0)$.  We claim this system is equivalent.

  Given a solution $x_1,\ldots,x_r,y_1,\ldots,y_r$ to \eqrefb{Q-DIFF}, clearly
  $x_1\mathbf e_1,\ldots,x_r\mathbf e_r,y_1\mathbf e_1,\ldots,y_r\mathbf e_r$ is a
  solution to \eqrefb{eqn:rank1}.  Conversely, given a solution
  $\mathbf u_1,\ldots \mathbf u_r,\mathbf v_1,\ldots,\mathbf v_r$ to
  \eqrefb{eqn:rank1}, $u_{11},\ldots,u_{r1},v_{11},\ldots,v_{r1}$ is a solution to
  \eqrefb{Q-DIFF}.
\end{proof}
\begin{thm} \label{thm:oneB}
  The solvability in the integers of systems of the form \eqref{Q-BIL} is
  undecidable when $p=1$, that is, when
  $\mathbf B(\mathbf u,\mathbf v)=\mathbf u^TB\mathbf v$ for some $m \times n$
  matrix $B$.
\end{thm}
\begin{rmk} \label{more:p=1}
  One readily sees from the proof that this result admits various
  generalizations:
  \begin{enumerate}
  \item The result holds with the integers replaced by any PID $R$ in which
    Hilbert's tenth problem is undecidable, such as $R=\mathbb{Z}[i]$.  When $R$
    is finite-dimensional and free as a $\mathbb Z$-module, a Diophantine system
    of this form over $R$ with integer coefficients can be reinterpreted as an
    integral Diophantine system of the form (Q-BLIN$(\mathbf A \otimes B)$),
    where $\mathbf A:\mathbb Z^d \otimes \mathbb Z^d \to \mathbb Z^d$ describes
    the multiplication law in $R$ and $\mathbf A \otimes B$ is interpreted as a
    map
    \[(\mathbb Z^d \otimes \mathbb Z^m) \otimes (\mathbb Z^d \otimes \mathbb Z^n)
    \to (\mathbb Z^d \otimes \mathbb Z).\]
    Therefore, the solvability of systems of the form
    (Q-BLIN$(\mathbf A \otimes B)$) is again undecidable.
  \item The result also holds for $p>1$ if the following algebraic condition is
    satisfied: there are decompositions $\mathbb Q^m=L \oplus S$ and
    $\mathbb Q^n=L' \oplus S'$ such that $L$ and $L'$ are one-dimensional and
    the bilinear map $\mathbf B \otimes \mathbb Q:
    \mathbb{Q}^m \otimes \mathbb{Q}^n \to \mathbb{Q}^p$ restricts to zero on
    $L \otimes S'$ and $L' \otimes S$ and is nonzero on $L \otimes L'$.
  \end{enumerate}
\end{rmk}
\begin{rmk}
  Proposition \ref{rank1} and Theorem \ref{thm:oneB} are in some sense opposite
  extremes: the more independent coordinates in the image of $\mathbf B$, the
  likelier one is to find a direction in which the rank is low.  In between we
  have the case where $\mathbf B:\mathbb Z^n \times \mathbb Z^n \to \mathbb Z^n$
  has full rank in every direction; this includes multiplication laws of rings of
  integers of number fields and may be the most difficult situation.
\end{rmk}
\begin{proof}[Proof of Theorem \ref*{thm:oneB}.]
  We show that a system of the form \eqref{Q-DIFF} can be simulated with one of
  the form \ref{Q-BIL}.  By Lemma \ref{diff}, this is sufficient to show that
  solvability of systems of the form \ref{Q-BIL} is undecidable.  The proof is
  again closely related to that of the undecidability of \eqref{Q-SYM} in
  \cite{CKMVW}.

  We first show that we can replace $B$ with a diagonal matrix.
  \begin{lem} \label{lem:fullrank}
    Given an $m \times n$ matrix $B$, there is a square diagonal full-rank
    matrix $B'$ such that for every choice of $\{a_{ij}\}$ and $c_q$, the system
    \begin{equation} \label{eqn:original}
      \sum_{i,j=1}^r a_{ij}^{(q)}\mathbf u_i^TB\mathbf v_j = c_q, \qquad
      q=1,\ldots,s
    \end{equation}
    has a solution if and only if the system
    \begin{equation} \label{eqn:new}
      \sum_{i,j=1}^r a_{ij}^{(q)}(\mathbf u'_i)^TB'\mathbf v'_j = c_q, \qquad q=1,\ldots,s
    \end{equation}
    has a solution.
  \end{lem}
  \begin{proof}
    We can write $B=SAT$ where $A$ is the Smith normal form and $S$ and $T$ are
    invertible $m \times m$ and $n \times n$ matrices, respectively.  Then the
    vectors $(\mathbf u_i,\mathbf v_j)_{i,j=1,\ldots,r}$ are a solution to the
    system \eqrefb{eqn:original} if and only if
    $(S^T\mathbf u_i,T\mathbf v_j)_{i,j=1,\ldots,r}$ are a solution to the system
    \[\sum_{i,j=1}^r a_{ij}^{(q)}\mathbf u_i^TA\mathbf v_j = c_q, \qquad
      q = 1,\ldots,s.\]
    The matrix $A$ consists of a full-rank diagonal submatrix $B'$ in the top
    left corner and zeros everywhere else.  After removing variables which don't
    appear in any terms with nonzero coefficients, we obtain the system
    \eqrefb{eqn:new} with this $B'$.
  \end{proof}
  Thus we may assume that $m=n$ and $B=(b_{k\ell})$ is a diagonal matrix of full
  rank.

  Now consider a general system of the form \eqrefb{Q-DIFF}.  We use it to build
  a system of the form \eqrefb{Q-BIL} with variables
  \begin{align*}
    & u_{i1},\ldots,u_{in}\text{ and }v_{j1},\ldots,v_{jn}, & 1 &\leq i \leq r, \\
    & z_{k\ell}\text{ and }w_{k\ell}, & 1 &\leq k,\ell \leq n.
  \end{align*}
  Define $n \times n$ matrices $Z=(z_{k\ell})$ and $W=(w_{k\ell})$.  Then the
  equations of our new system are
  \begin{equation} \label{inst-BIL}
    \left\{\begin{aligned}
    \sum_{i,j=1}^r a_{ij}^{(q)}\mathbf u_i^TB\mathbf v_j&=b_{11}c_q,
    & q&=1,\ldots,s,\\
    Z^TBW &= B,  \\
    (\mathbf u_i^TBW)_\ell &= 0, & i &= 1,\ldots,r, & \ell &= 2,\ldots,n, \\
    (Z^TB\mathbf v_j)_k &= 0, & j &= 1,\ldots,r, & k &= 2,\ldots,n.
    \end{aligned}\right.
  \end{equation}
  To complete the proof, we must show that the system \eqrefb{inst-BIL} has a
  solution if and only if \eqrefb{Q-DIFF} does.  It is easy to see that
  $\{x_i,y_j\}_{1 \leq i,j \leq r}$ is a solution to \eqrefb{Q-DIFF} if and only if
  \[Z=W=I_n, \qquad \mathbf u_i=x_i\mathbf e_1,\qquad\mathbf v_j=y_j\mathbf e_1,\]
  where $\mathbf e_1$ is the basis vector $(1,0,\ldots,0)$, is a solution to
  \eqrefb{inst-BIL}.  In particular, if \eqrefb{Q-DIFF} has a solution, then so
  does \eqrefb{inst-BIL}.  Conversely, suppose that we have a solution for
  \eqrefb{inst-BIL}.  Since they are integer matrices and $B$ has nonzero
  determinant, $Z$ and $W$ must both have determinant $\pm1$ and are invertible
  over $\mathbb Z$.  Then \eqrefb{inst-BIL} also has the solution
  \[\mathbf u_i'=Z^{-1}\mathbf u_i,\qquad\mathbf v_j'=W^{-1}\mathbf v_j,
  \qquad Z'=W'=I_n,\]
  and $x_i=u_{i1}',y_j=v_{j1}'$ is a solution for \eqrefb{Q-DIFF}.
\end{proof}

\section{Undecidability of extension problems} \label{S:undec}

\begin{thm} \label{thm:undec}
  Let $Y$ be a simply connected finite complex which is not a rational H-space.
  Then the problem of deciding, for a finite simplicial pair $(X,A)$ and a map
  $\ph:A \to Y$, whether an extension to $X$ exists is at least as hard as
  deciding solvability for systems of equations of the form \eqref{Q-BIL}, for a
  bilinear map $\mathbf B$ depending on $Y$.  Moreover, it is enough to
  consider pairs satisfying $\cd(X,A)=d+1$, where $d$ is the smallest degree
  such that $Y_{d}$ is not a rational H-space.
\end{thm}
Examples of target spaces $Y$ for which this gives us a proof of undecidability
include $\mathbb CP^n$ for any $n$, $\mathbb CP^2 \# \mathbb CP^2$, punctured
products of odd-dimensional spheres, Grassmannians, and any $Y$ such that
$\pi_d(Y)$ has rank 1.  In general, one should be able to prove undecidability of
the extension problem for a wide range of target spaces after computing their
Sullivan minimal model.

Before proving the theorem in full generality, we review the proof in
\cite{CKMVW} of the case $Y=S^2$, where undecidability is shown by reduction from
Hilbert's tenth problem for systems of equations of the type \eqref{Q-SYM}.  How
do the authors encode equations in an extension problem?  There are three
ingredients, all encoded into cells of the pair $(X,A)$:
\begin{itemize}
\item Variable cells: copies $S^2_i$ of $S^2$ in $X$ which are not in $A$, and
  hence can be mapped to $Y$ with arbitrary degree.
\item $3$-spheres encoding constant terms of equations: copies $S^3_q$ of $S^3$
  in $A$, which are mapped to $Y$ with a fixed Hopf invariant $b_q$ by the map
  $\ph$.
\item $4$-cells encoding equations.  The $q$th $4$-cell is attached to the rest
  of $X$ by the map
  \[-2b_q\id_{S^3_q}+\sum_{1 \leq i<j \leq r} a_{ij}^{(q)}[\id_{S^2_i},\id_{S^2_j}],\]
  where $[\alpha,\beta]$ denotes the \emph{Whitehead product} of $\alpha$ and
  $\beta$: the composition
  \[S^3 \xrightarrow{\text{attaching map of the top cell of }S^2 \times S^2}
  S^2 \vee S^2 \xrightarrow{\alpha \vee \beta} X^{(3)}.\]
\end{itemize}
In summary, $A$ is a wedge of $3$-spheres and $X$ consists of a wedge of $2$- and
$3$-spheres with $4$-cells attached.

The homotopy class of a map $S^3 \to S^2$ is determined by its Hopf invariant, an
integer.  The Whitehead product $[\id_{S^2},\id_{S^2}]:S^3 \to S^2$ has Hopf
invariant $2$, and the Whitehead product is bilinear in the two variables.
Therefore, the $4$-cells force the degrees $x_i$ on $S^2_i$ of an extension of
$\ph$ to $X$ to satisfy the equations \eqref{Q-SYM}.

The minimal model of $S^2$ is
\[\bigl(\textstyle{\bigwedge}(a^2,b^3),da=0, db=a^2\bigr).\]
The Hopf invariant can be thought of as the result of pairing with $b$.  There is
therefore a relationship between the differential and the Whitehead product:
\[\langle b,[f,g] \rangle=2\langle a,f \rangle\langle a,g \rangle.\]
Such a relationship holds more generally.

In the general case, we use a similar tactic, but with higher-order Whitehead
products, originally defined by Porter \cite{Porter}.  Given spheres
$S^{n_1},\ldots,S^{n_\ord}$, their product can be given a cell structure with one
cell for each subset of $\{1,\ldots,\ord\}$.  Define their \emph{fat wedge}
$\mathbb{V}_{i=1}^\ord S^{n_i}$ to be this cell structure without the top face.
Let $N=-1+\sum_{i=1}^\ord n_i$, and let $\tau:S^N \to \mathbb{V}_{i=1}^\ord S^{n_i}$
be the attaching map of the missing face.  By definition, $\alpha \in \pi_N(Y)$
is contained in the \emph{$\ord$th-order Whitehead product}
$[\alpha_1,\ldots,\alpha_\ord]$, where $\alpha_i \in \pi_{n_i}(Y)$, if it has a
representative which factors through a map
\[S^N \xrightarrow{\tau} \mathbb{V}_{i=1}^\ord S^{n_i} \xrightarrow{f_\alpha} Y\]
such that $[f_\alpha|_{S^{n_i}}]=\alpha_i$.  Note that there are many potential
indeterminacies in how higher-dimensional cells are mapped, so
$[\alpha_1,\ldots,\alpha_\ord]$ is a set of homotopy classes rather than a unique
class.  This set may be empty: for example, if the ordinary Whitehead product
$[\alpha,\beta]$ is nonzero, then $[\alpha,\beta,\gamma]$ is empty for any
$\gamma$ because there is no way to extend the map $\alpha \vee \beta$ to the
product cell.  However, this is not the case in our situation:
\begin{lem} \label{lem:nonempty}
  Suppose that $Y$ is a rational H-space through degree $d-1$.  Then every
  $d$-dimensional higher-order Whitehead product in $Y_{(0)}$ is nonempty.
\end{lem}
\begin{proof}
  Let $\alpha_i:S^{n_i} \to Y_{(0)}$, for $i=1,\ldots,\ord$ be homotopy classes of
  maps, and suppose $\sum_i n_i=d+1$.  Since $Y_{d-1(0)}$ is an H-space,
  $\bigvee_i \alpha_i:\bigvee_i S^{n_i} \to Y_{d-1(0)}$ extends via the H-space
  operation to a map $F:\prod_i S^{n_i} \to Y_{d-1(0)}$.  The obstruction to
  lifting $F$ to $Y_{(0)}$ lies in $(d+1)$-dimensional cohomology, and therefore
  the restriction of $F$ to the fat wedge lifts to $Y_{(0)}$.
\end{proof}

Importantly, higher-order Whitehead products are graded symmetric and multilinear
in a weak sense.  It is easy to see that the factors commute or anticommute as
determined by the grading.  For multilinearity, notice that if two maps
$f,g:S^n \times X \to Y$ agree on $* \times X$, where $* \in S^n$ is some base
point, then there is a well-defined map ``$f+g$'' given by
\begin{equation} \label{pinch}
  S^n \times X \xrightarrow{\text{``pinch the waist''} \times \id}
  (S^n \vee S^n) \times X \xrightarrow{f \cup_{* \times X} g} Y,
\end{equation}
which induces addition in $\pi_n(Y,f(*,x))$ on every fiber $S^n \times \{x\}$.
Likewise, if $f,g:\mathbb{V}_{i=1}^\ord S^{n_i} \to Y$, where $f \circ \tau$ and
$g \circ \tau$ represent elements of $[\alpha_1,\ldots,\alpha_\ord]$ and
$[\alpha_1',\alpha_2,\ldots,\alpha_\ord]$ respectively, agree on
$\prod_{i=2}^\ord S^{n_i}$, then a similar operation yields a well-defined element
$[f+g] \in [\alpha_1+\alpha_1',\alpha_2,\ldots,\alpha_\ord]$.  In particular,
taking $f=g$ and performing the operation repeatedly, we get that
\[[c\alpha_1,\ldots,\alpha_\ord] \supseteq c[\alpha_1,\ldots,\alpha_\ord]\]
(see e.g.~\cite[Theorem 2.13]{Porter}).  Note, however, that there is no more
general notion of additivity.

We can use this weak multilinearity to relate Lemma \ref{lem:nonempty} to $Y$
itself:

\begin{lem} \label{lem:exists}
  For $i=1,\ldots,\ord$, let $\alpha_i \in \pi_{n_i}(Y)$, and denote the image in
  $\pi_{n_i}(Y_{(0)})$ by $\alpha_{i(0)}$.  If
  $[\alpha_{1(0)},\ldots,\alpha_{\ord(0)}] \subseteq \pi_N(Y_{(0)})$ is nonempty,
  then there are positive integers $r_1,\ldots,r_\ord$ such that
  $[r_1\alpha_1,\ldots,r_\ord\alpha_\ord]$ is nonempty in $\pi_N(Y)$.  Moreover,
  if $Y$ is of finite type, then the set of $r_i$ can be chosen to depend only on
  the set of $n_i$.
\end{lem}
\begin{proof}
  We prove a stronger statement: if $\beta \in \pi_N(Y_{(0)})$ is an element of
  $[\alpha_{1(0)},\ldots,\alpha_{\ord(0)}]$, then for some $R=r_1 \cdots r_\ord$,
  $R\beta$ lifts to an element of
  $[r_1\alpha_1,\ldots,r_\ord\alpha_\ord] \subseteq \pi_N(Y)$.

  Let $f:\mathbb{V}_{i=1}^\ord S^{n_i} \to Y_{(0)}$ be a map such that
  $f \circ \tau$ is a representative of $\beta$.  Denote by
  $(r_1,\ldots,r_\ord)f:\mathbb{V}_{i=1}^\ord S^{n_i} \to Y_{(0)}$ the corresponding
  map in which the $i$th factor is multiplied by $r_i$ in the sense of
  \eqref{pinch}.  We construct a map $\mathbb{V}_{i=1}^\ord S^{n_i} \to Y$, one
  cell at a time.  Suppose that $Z$ is a subcomplex of
  $\mathbb{V}_{i=1}^\ord S^{n_i}$ including the boundary of a particular cell $e$
  (WLOG, the top cell of $\prod_{i=1}^s S^{n_i}$ for some $s<\ord$), and
  $F:Z \to Y$ is a lift of $(r_1,\ldots,r_\ord)f$ for some factors
  $r_1,\ldots,r_\ord$.  In particular, if $\tau_s:S^M \to Z$ is the attaching map
  of this cell, then the obstruction to extending $F \circ \tau_s$ to a lift of
  $(r_1,\ldots,r_\ord)f|_e$ is a finite order element of $\pi_{M+1}(Y)$, and there
  is an $r_{\text{new}}>0$ such that $r_{\text{new}}[F \circ \tau_s]=0$.  (In the
  finite type case, we can choose $r_{\text{new}}$ to be the cardinality of the
  torsion subgroup of $\pi_{M+1}(Y)$.)  Using an \eqref{pinch}-type construction
  to multiply the first factor of $F$ by $r_{\text{new}}$, we get a lift of
  $(r_{\text{new}}r_1,r_2,\ldots,r_\ord)f|_{Z \cup e}$ to a map $Z \cup e \to Y$.

  Starting with $S^{n_1} \vee \cdots \vee S^{n_\kappa}$ and performing the operation
  for every higher cell, we obtain a lift of $R\beta$ for some $R$.
\end{proof}

Finally, in the setting of the theorem, certain $d$-dimensional higher-order
Whitehead products don't only exist but are virtually unique:
\begin{lem} \label{lem:uniq}
  Suppose that $Y$ is a rational H-space through degree $d-1$.  Then for some
  $\ord \geq 2$, there is a nonempty $\ord$th-order Whitehead product in
  $\pi_d(Y)$ containing no torsion elements.  Moreover, for the smallest such
  $\ord$, all $\ord$th-order Whitehead products in $\pi_d(Y)$ are unique up to
  torsion.
\end{lem}
\begin{proof}
  Fix a minimal model $\mathcal{M}_Y$ for $Y$ and a basis of generators for the
  indecomposables $V_n$ in each degree $n$ which is dual to a basis for
  $\pi_n(Y)/$torsion.  Since $Y$ is not a rational H-space, there is some least
  $d$ such that the differential in the minimal model $\mathcal{M}_Y$ is
  nontrivial.  Recall that for a minimal model, each nonzero term in the
  differential is at least quadratic.  For each of the generators $\eta$ of
  $V_d$, $d\eta$ is a polynomial in the lower-degree generators.  Denote by
  \emph{P-degree} the degree of an element of the minimal model as a polynomial
  in these generators, as opposed to the degree imposed by the grading.  Let
  $\ord$ be the minimal P-degree of any monomial in any $d\eta$.

  To prove the lemma, we use the connection, first investigated in \cite{AA},
  between the differential in the minimal model and higher-order Whitehead
  products.  The main theorem of \cite{AA}, Theorem 5.4, gives a formula for the
  pairing between an indecomposable $\eta \in V_n$ and any element of an
  $i$th-order Whitehead product set in $\pi_n$, assuming that every term of
  $d\eta$ has P-degree at least $i$.  This formula is somewhat complicated, but
  is $i$-linear in the pairings between factors of the terms of $d\eta$ and
  factors of the Whitehead product.

  It follows that, given an element of a $\ord$th-order Whitehead product set in
  $\pi_d(Y)$, its pairings with each of the generators $\eta$ are given uniquely
  by this formula.  Since $V_d$ is dual to $\pi_d(Y) \otimes \mathbb{Q}$, all
  elements of the $\ord$th-order Whitehead product set are in the same rational
  homotopy class.

  Consider a particular $\eta$ and a particular term $\mu$ of $d\eta$ whose
  P-degree is $\ord$.  Let $\alpha_1,\ldots,\alpha_\ord$ be elements of $\pi_*(Y)$
  dual to the variables in this term.  By Lemmas \ref{lem:nonempty} and
  \ref{lem:exists}, some $[r_1\alpha_1,\ldots,r_\ord\alpha_\ord]$ is nonempty.
  Every element of this set pairs nontrivially with $d\eta$, since there is a
  nonzero contribution from $\mu$ and a zero contribution from all other terms.
  Therefore the set does not contain a torsion element.
\end{proof}

\begin{proof}[Proof of Theorem \ref{thm:undec}]
  We reduce from the problem \eqref{Q-BIL}, for an appropriate bilinear map
  $\mathbf B$.  For each instance of this problem, we construct a pair $(X,A)$
  and map $f:A \to Y$ such that an extension exists if and the instance has a
  solution.

  Fix a minimal model $\mathcal{M}_Y$ for $Y$ and a basis of generators
  $\eta_1,\ldots,\eta_p$ for the indecomposables $V_d$ in degree $d$.  By Lemma
  \ref{lem:uniq}, there is a nontrivial and rationally unique higher-order
  Whitehead product $[\alpha_1,\ldots,\alpha_\ord] \subset \pi_d(Y)$, where
  $\alpha_i \in \pi_{d_i}(Y)$.  Moreover, by Lemma \ref{lem:exists}, we may
  choose $\alpha_3,\ldots,\alpha_\ord$ and positive integers $\rho_1$ and $\rho_2$
  so that for every choice of $\beta \in \rho_1\pi_{d_1}(Y)$ and
  $\gamma \in \rho_2\pi_{d_2}(Y)$, $[\beta,\gamma,\alpha_3,\ldots,\alpha_\ord]$
  is nonempty.

  Now we fix $\alpha_3,\ldots,\alpha_\ord$ and vary $\beta$ and $\gamma$.  For
  each $\eta_k$, the pairing
  $\langle \eta_k,[\beta,\gamma,\alpha_3,\ldots,\alpha_\ord]\rangle$ is
  bilinear in $\beta$ and $\gamma$.  In particular, after fixing
  $\mathbb Z$-bases for $\rho_1\pi_{d_1}(Y) \cong \mathbb Z^m$ and
  $\rho_2\pi_{d_2}(Y) \cong \mathbb Z^n$, we get a bilinear map
  $\mathbf B:\mathbb Z^m \times \mathbb Z^n \to \mathbb Z^p$.

  Now given a system of the form \eqref{Q-BIL}, we will build a
  $(d+1)$-dimensional pair $(X,A)$ and a map $f:A \to Y$ such that the extension
  problem has a solution if and only if the system does.  We define
  \[A=\bigvee_{q=1}^s S^d_q \vee \bigvee_{i=3}^\ord S^{d_i},\]
  and let $f:A \to Y$ send
  \begin{itemize}
  \item $S^{d_i}$ to $Y$ via a representative of $\alpha_i$;
  \item $S^d_q$ to $Y$ via an element of $\pi_d(Y)$ whose pairing with $\eta_k$,
    for each $k$, is $c_{kq}$.
  \end{itemize}
  Finally, we build $X$ from
  $A'=A \vee \bigvee_{i=1}^r S^{d_1}_i \vee \bigvee_{j=1}^r S^{d_2}_j$ as follows:
  \begin{itemize}
  \item Add on cells so that for every $i$ and $j$, $X$ includes the fat wedge
    $\mathbb{V}(S^{d_1}_i,S^{d_2}_j,S^{d_3},\ldots,S^{d_\ord})$, and these fat
    wedges only intersect in $A'$.  Let $\ph_{ij}:S^d \to X$ be the attaching
    map of the missing $(d+1)$-cell for the $(i,j)$th fat wedge.
  \item Add on spheres $S^{d_1\prime}_i$ together with the mapping cylinder of a
    map $S^{d_1}_i \to S^{d_1\prime}_i$ of degree $\rho_1$, and spheres
    $S^{d_2\prime}_j$ together with the mapping cylinder of a map
    $S^{d_2}_j \to S^{d_2\prime}_j$ of degree $\rho_2$.
  \item Then, for each $q$, add a $(d+1)$-cell attached along a representative of
    $\rho([S^d_q]-\sum_{i,j=1}^r a_{ij}^{(q)}[\ph_{ij}])$, where $\rho$ is the
    exponent of the torsion part of $\pi_d(Y)$.
  \end{itemize}
  It is easy to see that $H_n(X,A)=0$ for $n>d$.

  We claim that $(X,A)$ and $f$ pose the desired extension problem.  Indeed, any
  extension of $f$ to $\tilde f:X \to Y$ sends each $S^{d_1}_i$ to an element
  $\beta_i \in \rho_1\pi_{d_1}(Y)$ and each $S^{d_2}_j$ to an element
  $\gamma_j \in \rho_2\pi_{d_2}(Y)$, as constrained by the mapping cylinders.
  Then $\tilde f \circ \ph_{ij}:S^d \to Y$ represents an element of
  $[\beta_i,\gamma_j,\alpha_3,\ldots,\alpha_\ord]$.  Then the $(d+1)$-cells force
  the equations of \eqref{Q-BIL} to hold.

  Conversely, given a satisfying assignment for \eqref{Q-BIL}, there is an
  extension $\tilde f:X \to Y$.  To see this, note that such a satisfying
  assignment gives us values for $\beta_i$ and $\gamma_j$ up to torsion, and by
  construction there is an extension to the fat wedges and the mapping cylinders.
  Moreover, under any such extension, $f_*[S^d_q]$ and
  $\sum_{i,j=1}^r a_{ij}^{(q)}\tilde f_*[\ph_{ij}] \in \pi_d(Y)$ are rationally
  equivalent; thus when multiplied by $\rho$ they are equal, and the map extends
  to the $(d+1)$-cells of $X$.
\end{proof}
\begin{exs}
  We conclude by discussing some instances of $Y$ for which the extension problem
  is undecidable.  For example, if there are $\alpha_3,\ldots,\alpha_\kappa$ such
  that there exists a rationally nontrivial Whitehead product
  $[\alpha_1,\alpha_2,\alpha_3,\ldots,\alpha_\kappa] \in \pi_d(Y)$ and the image
  of the map
  \begin{align*}
    \pi_{d_1}(Y) \otimes \pi_{d_2}(Y) &\to \pi_d(Y) \\
    \beta \otimes \gamma &\mapsto [\beta,\gamma,\alpha_3,\ldots,\alpha_\kappa]
  \end{align*}
  is one dimensional, then the extension problem is undecidable by Theorem
  \ref{thm:oneB}.  This situation includes the cases where $Y$ is a fat wedge of
  odd spheres (in this case the one-dimensional subspace is generated by the
  universal Whitehead product) and $\mathbb CP^n$ (in this case the
  one-dimensional subspace is generated by
  $\underbrace{[\alpha,\ldots,\alpha]}_{n+1\text{ times}}$), as well as any $Y$ such
  that $\pi_d(Y)$ has rank 1.

  A different case is that of $\mathbb CP^2 \# \mathbb CP^2$.  In this case, we
  give explicit generators for the low-dimensional part of the minimal model:
  \[\Bigl(\bigwedge(a_1^2,a_2^2,b^3,c^3,\ldots), da_i=0,
  db=\frac{1}{2}a_1^2-\frac{1}{2}a_2^2, dc=a_1a_2,\ldots\Bigr).\]
  Here the $a_i$ are dual to the two-dimensional classes $\alpha_i$ representing
  the spheres in the two copies of $\mathbb CP^2$.  The $3$-dimensional
  generators are governed by the $4$-cell of $\mathbb CP^2 \# \mathbb CP^2$,
  which ensures that $[\alpha_1,\alpha_1]+[\alpha_2,\alpha_2]=0$.  Then the
  pairing of $b$ and $c$ with Whitehead products of linear combinations of
  $\alpha_1$ and $\alpha_2$ is described by the matrices
  \[\begin{pmatrix}1 & 0 \\ 0 & -1\end{pmatrix} \qquad \text{and} \qquad
    \begin{pmatrix}0 & 1 \\ 1 & 0\end{pmatrix}\]
  from Example \ref{ex:Z[i]}.  Thus the extension problem for maps from
  $4$-complexes to $\mathbb CP^2 \# \mathbb CP^2$ is equivalent to Hilbert's
  tenth problem for $\mathbb Z[i]$, and hence undecidable.

  One can construct similar (though perhaps less natural) examples for other
  number rings.

  Finally, as a demonstration of the wide range of natural examples that can be
  covered with our techniques, we show that the extension problem is undecidable
  for $Y=\widetilde{\Gr}_k(\mathbb R^n)$, the Grassmannian of oriented $k$-planes
  in $\mathbb R^n$, when $2 \leq k \leq n-2$.  The minimal models of these spaces
  are computed explicitly in \cite{MukSan}.  Letting $d$ be the least dimension
  for which the differential is nontrivial, this computation tells us:
  \begin{itemize}
  \item Whenever $k \neq n/2$, or when $k=n/2$ is odd,
    $\pi_d(\widetilde{\Gr}_k(\mathbb R^n))$ has rank $1$, and therefore the
    extension problem for maps into $\widetilde{\Gr}_k(\mathbb R^n)$ is
    undecidable.
  \item When $k=n/2$ and $k$ is even, write $t=k/2$.  In this case, $d=4t-1$,
    $\pi_d(\widetilde{\Gr}_k(\mathbb R^n))$ has rank 2, and the two generators
    have differentials
    \[dv_0=h_t-\tau^2, \qquad du_0=\sigma\tau.\]
    Here $\sigma$ and $\tau$ are generators of degree $k$, and $h_t$ is a
    polynomial in generators $p_1,\ldots,p_{t-1}$, where $p_i$ has degree $4i$,
    and $p_t=\sigma^2$.  Namely, $h_t$ is such that
    \[(1+p_1+\cdots+p_t)(1+h_1+h_2+\cdots)=1.\]
    From this formula one sees that the terms of the differential of multi-degree
    $(k,k)$, which are ``seen'' by pairing with Whitehead products between
    elements of $\pi_k$, are
    \[-\sigma^2-\tau^2,\quad\text{when $t$ is odd;}\qquad
    p_{t/2}^2-\sigma^2-\tau^2,\quad\text{when $t$ is even.}\]
    This lets us write down the bilinear maps
    \[\pi_k(\widetilde{\Gr}_k(\mathbb R^n)) \otimes \mathbb Q \times \pi_k(\widetilde{\Gr}_k(\mathbb R^n)) \otimes \mathbb Q \to \pi_d(\widetilde{\Gr}_k(\mathbb R^n)) \otimes \mathbb Q\]
    induced by the Whitehead product.  When $t$ is odd, the bilinear form given
    by pairing with $2u_0-v_0$ has rank 1, and we can apply Proposition
    \ref{rank1}.  When $t$ is even, we are in the situation of Remark
    \ref{more:p=1}(2).  (Both these claims only rely on the rational structure
    and are independent of integral information.)  In both cases, we know that
    the relevant version of Hilbert's tenth problem is undecidable, and therefore
    so is the extension problem for maps into $\widetilde{\Gr}_k(\mathbb R^n)$.
  \end{itemize}
\end{exs}

\bibliographystyle{amsalpha}
\bibliography{comput}

\newcommand{\etalchar}[1]{$^{#1}$}
\providecommand{\bysame}{\leavevmode\hbox to3em{\hrulefill}\thinspace}
\providecommand{\MR}{\relax\ifhmode\unskip\space\fi MR }
\providecommand{\MRhref}[2]{%
  \href{http://www.ams.org/mathscinet-getitem?mr=#1}{#2}
}
\providecommand{\href}[2]{#2}
\begin{thebibliography}{{\v{C}}KV17}

\bibitem[AA78]{AA}
P.~Andrews and M.~Arkowitz, \emph{Sullivan's minimal models and higher order
  {W}hitehead products}, Canadian Journal of Mathematics \textbf{30} (1978),
  no.~5, 961--982.

\bibitem[Bro57]{Brown}
E.~H. Brown, \emph{Finite computability of {P}ostnikov complexes}, Annals of
  Mathematics (1957), 1--20.

\bibitem[{\v{C}}{\etalchar{+}}14a]{CKMSVW}
M.~{\v{C}}adek, M.~Kr{\v{c}\'a}l, J.~Matou{\v{s}}ek, F.~Sergeraert,
  L.~Vok{\v{r}\'\i}nek, and U.~Wagner, \emph{Computing all maps into a sphere},
  J. ACM \textbf{61} (2014), no.~3, Art. 17, 44.

\bibitem[{\v{C}}{\etalchar{+}}14b]{CKMVW}
M.~{\v{C}}adek, M.~Kr{\v{c}}{\'a}l, J.~Matou{\v{s}}ek, L.~Vok{\v{r}}{\'\i}nek,
  and U.~Wagner, \emph{Extendability of continuous maps is undecidable},
  Discrete \& Computational Geometry \textbf{51} (2014), no.~1, 24--66.

\bibitem[{\v{C}}{\etalchar{+}}14c]{CKMVW2}
\bysame, \emph{Polynomial-time computation of homotopy groups and {P}ostnikov
  systems in fixed dimension}, SIAM Journal on Computing \textbf{43} (2014),
  no.~5, 1728--1780.

\bibitem[{\v{C}}KV17]{CKV}
M.~{\v{C}}adek, M.~Kr{\v{c}}{\'a}l, and L.~Vok{\v{r}}{\'\i}nek,
  \emph{Algorithmic solvability of the lifting-extension problem}, Discrete \&
  Computational Geometry \textbf{57} (2017), no.~4, 915--965.

\bibitem[Den75]{Denef}
J.~Denef, \emph{Hilbert's tenth problem for quadratic rings}, Proc. Amer. Math.
  Soc. \textbf{48} (1975), 214--220.

\bibitem[DL78]{DL}
J.~Denef and L.~Lipshitz, \emph{Diophantine sets over some rings of algebraic
  integers}, J. London Math. Soc. (2) \textbf{18} (1978), no.~3, 385--391.

\bibitem[FHT12]{FHT}
Y.~F{\'e}lix, S.~Halperin, and J.-C. Thomas, \emph{Rational homotopy theory},
  Graduate Texts in Mathematics, vol. 205, Springer, 2012.

\bibitem[FV20]{FiVo}
M.~Filakovsk{\'y} and L.~Vok{\v{r}}{\'\i}nek, \emph{Are two given maps
  homotopic? {A}n algorithmic viewpoint}, Foundations of Computational
  Mathematics \textbf{20} (2020), no.~2, 311--330.

\bibitem[GM81]{GrMo}
P.~A. Griffiths and J.~W. Morgan, \emph{Rational homotopy theory and
  differential forms}, Birkh{\"a}user, 1981.

\bibitem[LS12]{LupS}
G.~Lupton and S.~B. Smith, \emph{Fibrewise rational {H}-spaces}, Algebr. Geom.
  Topol. \textbf{12} (2012), no.~3, 1667--1694.

\bibitem[Mil62]{MilAHT}
J.~Milnor, \emph{On axiomatic homology theory}, Pacific J. Math. \textbf{12}
  (1962), 337--341.

\bibitem[Mil68]{Milnor}
\bysame, \emph{On characteristic classes for spherical fibre spaces}, Comment.
  Math. Helv. \textbf{43} (1968), 51--77.

\bibitem[MR10]{MaRu}
B.~Mazur and K.~Rubin, \emph{Ranks of twists of elliptic curves and {H}ilbert's
  tenth problem}, Invent. Math. \textbf{181} (2010), no.~3, 541--575.

\bibitem[MS00]{MukSan}
Goutam Mukherjee and Parameswaran Sankaran, \emph{Minimal models of oriented
  {G}rassmannians and applications}, Math. Slovaca \textbf{50} (2000), no.~5,
  567--579.

\bibitem[MW20]{MW}
F.~Manin and Sh. Weinberger, \emph{Integral and rational mapping classes}, Duke
  Math. J. \textbf{169} (2020), no.~10, 1943--1969.

\bibitem[Poo02]{Poo2}
B.~Poonen, \emph{Using elliptic curves of rank one towards the undecidability
  of {H}ilbert's tenth problem over rings of algebraic integers}, Algorithmic
  number theory ({S}ydney, 2002), Lecture Notes in Comput. Sci., vol. 2369,
  Springer, Berlin, 2002, pp.~33--42.

\bibitem[Poo03]{Poo1}
\bysame, \emph{Hilbert’s tenth problem over rings of number-theoretic
  interest}, Notes for a lecture series at the {A}rizona {W}inter {S}chool on
  ``{N}umber {T}heory and {L}ogic'', 2003.

\bibitem[Por65]{Porter}
G.~J. Porter, \emph{Higher-order {W}hitehead products}, Topology \textbf{3}
  (1965), 123--135.

\bibitem[Qui69]{Qui}
D.~Quillen, \emph{Rational homotopy theory}, Ann. of Math. (2) \textbf{90}
  (1969), 205--295.

\bibitem[RER09]{RER}
A.~Romero, G.~Ellis, and J.~Rubio, \emph{Interoperating between computer
  algebra systems: computing homology of groups with {K}enzo and {GAP}},
  I{SSAC} 2009---{P}roceedings of the 2009 {I}nternational {S}ymposium on
  {S}ymbolic and {A}lgebraic {C}omputation, ACM, New York, 2009, pp.~303--310.

\bibitem[RS02]{RuSergSurvey}
J.~Rubio and F.~Sergeraert, \emph{Constructive algebraic topology}, Bull. Sci.
  Math. \textbf{126} (2002), no.~5, 389--412.

\bibitem[RS12]{RuSerg}
\bysame, \emph{Constructive homological algebra and applications},
  arXiv:1208.3816 [math.KT], 2012.

\bibitem[Ser94]{Serg}
F.~Sergeraert, \emph{The computability problem in algebraic topology}, Adv.
  Math. \textbf{104} (1994), no.~1, 1--29.

\bibitem[Shl00]{Shlapen1}
A.~Shlapentokh, \emph{Hilbert's tenth problem over number fields, a survey},
  Hilbert's tenth problem: relations with arithmetic and algebraic geometry
  ({G}hent, 1999), Contemp. Math., vol. 270, Amer. Math. Soc., Providence, RI,
  2000, pp.~107--137.

\bibitem[Smi01]{Smith}
S.~B. Smith, \emph{Rational type of classifying spaces for fibrations}, Groups
  of homotopy self-equivalences and related topics ({G}argnano, 1999), Contemp.
  Math., vol. 274, Amer. Math. Soc., Providence, RI, 2001, pp.~299--307.

\bibitem[Spa66]{Spa}
E.~H. Spanier, \emph{Algebraic topology}, McGraw-Hill, 1966.

\bibitem[Sta70]{StaBook}
J.~Stasheff, \emph{{H}-spaces from a homotopy point of view}, Lecture Notes in
  Mathematics, Vol. 161, Springer-Verlag, Berlin-New York, 1970.

\bibitem[Sul77]{SulLong}
D.~Sullivan, \emph{Infinitesimal computations in topology}, Inst. Hautes
  \'{E}tudes Sci. Publ. Math. (1977), no.~47, 269--331 (1978).

\bibitem[Sul05]{SulLoc}
\bysame, \emph{Geometric topology: localization, periodicity and {G}alois
  symmetry}, $K$-Monographs in Mathematics, vol.~8, Springer, 2005.

\bibitem[Vok17]{Vok}
L.~Vok{\v{r}}{\'\i}nek, \emph{Decidability of the extension problem for maps
  into odd-dimensional spheres}, Discrete \& Computational Geometry \textbf{57}
  (2017), no.~1, 1--11.

\end{thebibliography}
\end{document}